\documentclass[10pt]{article}
 \usepackage[dvips]{graphicx}
 \usepackage{psfrag}
 \let\theoremstyle\relax
 \usepackage{amsfonts, amssymb, amsmath, amsthm}
 \usepackage{color}
 \usepackage{mathabx}
\usepackage{euscript}
\usepackage{mathrsfs}
\usepackage{appendix}
\usepackage{enumitem}
\usepackage{lscape}
\usepackage{multirow}
\usepackage[table,xcdraw]{xcolor}
\usepackage{longtable}
\usepackage{graphicx, array, blindtext}
\usepackage[table,xcdraw]{xcolor}
\usepackage[space]{grffile}
\usepackage{float}
\usepackage{caption}
\usepackage{subcaption}
\newcolumntype{P}[1]{>{\centering\arraybackslash}p{#1}}
 \textwidth      145 mm
 \textheight     220 mm
 \topmargin       -2 mm
 \oddsidemargin    9 mm
 
\DeclareMathAlphabet{\mathpzc}{OT1}{pzc}{m}{it}

\renewcommand{\a}{\alpha}   \renewcommand{\b}{\beta}

\renewcommand{\d}{\delta}   \newcommand{\e}{\epsilon}
\newcommand{\ve}{\varepsilon}
      
   \renewcommand{\l}{\lambda}
\renewcommand{\L}{\Lambda}

\newcommand{\p}{\phi}
\newcommand{\vp}{\varphi}    
\renewcommand{\r}{\rho}      \newcommand{\s}{\sigma}
   \renewcommand{\t}{\tau}

   \newcommand{\G}{\Gamma}
\newcommand{\D}{\Delta}

\newcommand{\newvec}[1]{\vbox{\ialign{##\crcr$\displaystyle\rightharpoonup$\crcr\noalign{\kern-1pt\nointerlineskip}
$\hfil\displaystyle{#1}\hfil$\crcr}}} 
\newcommand{\longrightharpoonup}{\relbar\joinrel\rightharpoonup}
\newcommand{\longvec}[1]{\vbox{\ialign{##\crcr$\displaystyle\longrightharpoonup$\crcr\noalign{\kern-1pt\nointerlineskip}
$\hfil\displaystyle{#1}\hfil$\crcr}}} 

\newcommand{\RR}{{\mathbb R}}
\newcommand{\CC}{{\mathbb C}}
\newcommand{\NN}{{\mathbb N}}

\newcommand{\tv}{\text{\bf v}}
\newcommand{\tn}{\text{\bf n}}
\newcommand{\tp}{\text{\bf p}}
\newcommand{\tf}{\text{\bf f}}
\newcommand{\tx}{\text{\bf x}}
\newcommand{\sgn}{\text{sgn}}
\newcommand{\re}{\text{Re}}

\newcommand{\dom}{\text{Dom}}
\newcommand{\res}{\text{Res}}
\newcommand{\disc}{\text{disc}}


\newtheorem{thrm}{Theorem}[section]
\newtheorem{dfn}[thrm]{Definition}
\newtheorem{prps}[thrm]{Proposition}
\newtheorem{lem}[thrm]{Lemma}
\newtheorem{rem}[thrm]{Remark}
\newtheorem{cor}[thrm]{Corollary}

\newcounter{fig}
\newenvironment{fig}[1][]{\refstepcounter{fig}\par\medskip\noindent\center{}}

\newtheorem{theorem}{Theorem}[section]
\theoremstyle{plain}

\newtheorem{definition}[theorem]{Definition}

\newtheorem{lemma}[theorem]{Lemma}

\newtheorem{remark}[theorem]{Remark}
\numberwithin{equation}{section}

\title{Existence and Instability of Traveling Pulses of Keller-Segel System with Nonlinear Chemical Gradients and Small Diffusions}

\author{
Chueh-Hsin Chang\footnotemark[1]\ \footnotemark[5]
, Yu-Shuo Chen\footnotemark[2]\ \footnotemark[6]
, John M. Hong\footnotemark[3]\ \footnotemark[7]
, and Bo-Chih Huang\footnotemark[4]\ \footnotemark[8]
}

\begin{document}
\maketitle

\renewcommand{\thefootnote}{\fnsymbol{footnote}}
\footnotetext[1]{\scriptsize Department of Applied Mathematics, Tunghai University, Taichung 40704, Taiwan. Email:\hspace{-0.1cm} {\tt changjuexin@thu.edu.tw}.}
\footnotetext[2]{\scriptsize Department of Mathematics, National Central University, Taoyuan 32001, Taiwan. Email:\hspace{-0.1cm} {\tt formosa1502@gmail.com}.}
\footnotetext[3]{\scriptsize Department of Mathematics, National Central University, Taoyuan 32001, Taiwan. Email:\hspace{-0.1cm} {\tt jhong@math.ncu.edu.tw}.}
\footnotetext[4]{\scriptsize Department of Mathematics, National Chung Cheng University, Chiayi 62102, Taiwan. Email:\hspace{-0.1cm} {\tt huangbz@ccu.edu.tw}.}
\footnotetext[5]{\scriptsize The 1st author is partially supported by Ministry of Science and Technology, Taiwan under grants MOST-106-2115-M-029-001-MY2.}
\footnotetext[6]{\scriptsize The 2nd author is partially supported by Ministry of Science and Technology, Taiwan under grants MOST-106-2811-M-032-008.}
\footnotetext[7]{\scriptsize The 3rd author is partially supported by Ministry of Science and Technology, Taiwan under grants MOST-106-2115-M-008-006-MY2.}
\footnotetext[8]{\scriptsize The 4th author is partially supported by Ministry of Science and Technology, Taiwan under grants MOST-104-2115-M-194-012-MY3.}
\renewcommand{\thefootnote}{\arabic{footnote}}



\medskip

\begin{abstract}
\noindent {In this paper, we consider a generalized model of $2\times 2$ Keller-Segel system with nonlinear chemical gradient and small cell diffusion. The existence of the traveling pulses for such equations is established by the methods of geometric singular perturbation (GSP in short) and trapping regions from dynamical systems theory. By the technique of GSP, we show that the necessary condition for the existence of traveling pulses is that their limiting profiles with vanishing diffusion can only consist of the slow flows on the critical manifold of the corresponding algebraic-differential system. We also consider the linear instability of these pulses by the spectral analysis of the linearized
operators. } \newline
\\
\newline
{\bf MSC}: 34E13; 34K08; 35A01; 35C07; 35P20; 35Q92; 37D10; 92C17.\\
\\
{\bf Keywords}: Keller-Segel model; traveling wave solution; geometric singular perturbation theory; linear stability; spectral theory.
\end{abstract}


\section{Introduction}

In biology, every creature in the world, their response toward internal and external signals plays an important role in survival. As stated in Keller and Segel's paper \cite{KS}, "Bands of motile \emph{Escherichia coli} have been observed to travel at constant speed when the bacteria are placed in one end of a capillary tube containing oxygen and an energy source. Such bands are a consequence of a chemotactic mechanism." By definition, the chemotaxis is the movement of an organism for instance single-cell,  multicellular or bacteria in response to a chemical stimulus,  which permits the bacteria to seek an optimal environment: the bacteria avoid low concentrations and move preferentially toward higher concentrations of some critical substrate.

Mathematical modeling of chemotaxis has developed into a large and
diverse discipline, whose aspects include its mechanistic basis, the
modeling of specific systems and the mathematical behavior of the
underlying equations. The Keller-Segel system of chemotaxis
\cite{KS} has provided a cornerstone for much of this work, its
success being a consequence of its intuitive simplicity, analytical
tractability and capacity to replicate key behavior of chemotactic
populations. The general form of the model in \cite{HPa1} is
\begin{equation}
\label{gKS}
  \left\{\begin{split}
    u_t &= \nabla\cdot\big(k_1(u,v)\nabla u-k_2(u,v)u\nabla v\big)+k_3(u,v),\\
    v_t &= D_v\D v+k_4(u,v)-k_5(u,v)v.
  \end{split}\right.
\end{equation}
Here $u$ denotes the cell density, $v$ is the concentration of the chemical signal. $k_1,k_2$ and $k_3$ are diffusivity of the cells, the chemotactic sensitivity, and the cell growth and death respectively. The positive constant $D_v$ denotes the diffusion coefficient, and $k_4,k_5$ describes the production and degradation of the chemical signal respectively. Note that cell migration is dependent on the gradient of the signal.

In this paper, we consider the following one-dimensional Keller-Segel system with nonlinear gradients of chemical signal concentration and small cell diffusion:
\begin{equation}
\label{KS}
\left\{
 \begin{aligned}
   &u_t = (\epsilon u_x - \chi u\phi(v_x) )_x,\\
   &\epsilon v_t = v_{xx} + u -g(v),
 \end{aligned}
 \right.
\end{equation}
where $\chi>0$, $\phi$ and $g$ are functions of their variables, and $0<\epsilon \ll 1$. The term $\chi u\phi(v_x)$ stands for the nonlinear interaction between cells and chemical signals
and the function $g(v)$ stands for the nonlinear degradation of the chemical signal. In the general model \eqref{gKS}, the function $k_5$ represented as the degradation rate. The simplest one is to choose $k_5$ be a constant and  it is appeared in most papers \cite{CPZ,HPa1,HPo,H1,HWa,N1,NSY,SS}. It is interesting to consider a more complex degradation rate such as $g(v)$ is a cubic polynomial. Particularly, we are interested in the situation when the degradation is non-monotone. For this reason, we consider $g(v)$ is quadratic first. In the further we will study the case for $g(v)$ is cubic.

The small parameter $\epsilon$ appearing in system \eqref{KS} describes the situation when the cell under small motility, the chemical signal achieves the steady state much faster than the cell.

Let us review some previous results related to
\eqref{KS} and clarify the motivation of the study
of this equation. The majority of this work has been
devoted to a special case in which the functions $k_i$ are assumed
to have linear form. We refer this model as the minimal model for
Keller-Segel system named by Childress and Percus
\cite{ChPe}. The model is as follows:
\begin{equation}
\label{min}
  \left\{\begin{split}
    u_t &= \nabla\cdot\big(D_1\nabla u-\chi u\nabla v\big),\\
    v_t &= D_2\D v+f(u)-g(v),
  \end{split}\right.
\end{equation}
for some constants $D_1,D_2,\chi>0$. The minimal model has rich and interesting properties including globally existing solutions, finite time blow-up and spatial pattern formation. If $D_1=D_2=1$, and $f(u)=au,\ g(v)=bv$ for some $a,b\geq0$, system \eqref{min} on the bounded domains $\Omega\subset\RR^N$ with zero flux boundary condition has been shown that the qualitative behavior of solutions strongly depends on the space dimension. In one dimensional case ($N=1$), all solutions exist globally, which is proved recently in \cite{HPo,OY}. For $N=2$, let us denote
$$
  m:=\int_{\Omega}u_0dx,
$$
where $u_0$ is the initial cell density. If $m<4\pi$, the solution will exist globally in time and bounded \cite{OY}. Otherwise for any $m>4\pi$ which satisfying $m\not\in\{4k\pi\big| k\in\NN\}$, there is an initial data $(u_0,v_0)^T$ with $\int_{\Omega}u_0dx=m$ such that the corresponding solution for \eqref{min} blows up either in finite time or infinite time when $\Omega$ is simply connected \cite{HWa,SS}. In \cite{CPZ}, the critical function space for system \eqref{min} is $L^{N/2}(\Omega)$. In other words, it means that a threshold value exists such that initial condition below threshold in the $L^{N/2}$-norm lead to global existing solutions, whereas initial data above threshold lead to finite blow up. For the parabolic-elliptic variants of \eqref{min}
\begin{equation}
\label{pemin}
  \left\{\begin{split}
    u_t &= \D u-\chi\nabla\cdot(u\nabla v),\\
    0 &= \D v+au-bv,
  \end{split}\right.
\end{equation}
with zero flux boundary condition, it is proven in \cite{JL,N1,N2} that some solutions of \eqref{pemin} in two-dimensional domain blow up in finite time if $m$ is large enough. Whereas if $m$ is small, then solutions remain bounded. The precise threshold value for $m$ could be $8\pi$ when considering the radially symmetric solution and $4\pi$ in the general case. In three-dimensional case, finite time blow up in system \eqref{pemin} may occur without any threshold \cite{HMV,NSY,NS}. More detail can be found in the survey of Horstmann \cite{H1} and the references cited therein.

In \eqref{min} the interaction term $\chi u\nabla v$ is linear in
$\nabla v$ which may allow unbounded advective
velocities. In fact it is an unrealistic depiction of individual
cell behavior since cells have a maximum velocity. Due to this
reason, some authors Patlak \cite{Patlak} in the 1950s have derived
models for chemotaxis based on a more realistic description of
individual cell migration (see also \cite{A,Othmer2}). Different
macroscopic descriptions for a population flux were derived
according to different types of migration (\emph{flagella bacteria}
or \emph{leukocytes}). Under certain assumptions, a number of
generalized Keller-Segel equations were derived in which the
advective velocity in \eqref{min} was replaced with a form depending
nonlinearly on the signal gradient, for example
\begin{equation}
\label{nongrad}
  \left\{\begin{split}
    u_t &= \nabla( D \nabla u -\chi u \mathbf{\Phi}_c(\nabla v)),\\
    v_t &= \D v + u - g(v).
  \end{split}\right.
\end{equation}
$$
 \mathbf{\Phi}_c = \frac{1}{c}\left( \tanh\left(\frac{cv_{x_1}}{1+c}\right),\cdots,\tanh\left(\frac{cv_{x_N}}{1+c}\right) \right)
$$
where $D,\chi,c>0$ for the case of the flagella bacteria \emph{Escherichia coli}. Of course, a variety of other choices for $\mathbf{\Phi}_c$ may also be appropriate.
Motivated by the nonlinear gradient model \eqref{nongrad}, we assume that the functions $\p$ and $g$ in \eqref{KS} satisfy the following conditions:
\begin{enumerate}
 \item[($A_1$)] $\p\in C^1(\RR)$ is a monotonic function with $\p'>0$.
 \item[($A_2$)] $g\in C^0(\RR_+)$ is a positive function and there exists exactly one $\b>0$ such that $g(v)$ are strictly decreasing if $v<\beta$ and $g(v)$  is strictly increasing if $v>\beta$.
\end{enumerate}

\begin{remark}
If $\phi(0) = 0$ then the system \eqref{KS} is similar to the minimal model. Thus from now on we assume that $\phi(0)>0$.
\end{remark}
Our goal in this paper is to study the existence and stability of the traveling wave solutions of \eqref{KS} which are in the following form
\begin{equation*}
  (U,V)^T=\big(U(\xi), V(\xi)\big)^T, \quad \xi
\equiv x-st,
\end{equation*}
where $s$ represents the traveling speed of $(u,v)$. We impose the initial conditions for \eqref{KS} as
\begin{equation*}
\label{inicond}
  (u,v)^T(x,0)=(u_0,v_0)^T(x)\to (u_{\pm},v_{\pm})^T\text{ as }x\to\pm\infty.
\end{equation*}
Since $u$ and $v$ in \eqref{KS} represent the biological quantities, we will restrict to the biologically relevant regime in which $u_{\pm},v_{\pm}\ge 0$.

The first result for the existence of one-dimensional traveling wave solutions of the Keller-Segel system can be tracked back to 1971. Most of results \cite{EFN,KO,KS} about traveling waves, especially the traveling pulses, are obtained when one considers the logarithmic sensitivity as
$$
  \left\{\begin{array}{l}
    u_t=\big(D u_x-\chi u\ln(v)_x\big)_x, \\
    v_t=d_vv_{xx}-k u,
  \end{array}\right.
$$
where $d_v$ is treated as a small parameter in \cite{EFN}. In \cite{HoS}, the author consider the model
\begin{equation*}
\label{gKS2}
  \left\{\begin{array}{l}
    u_t=\big(k(u)u_x-u\p(v)v_x\big)_x, \\
    v_t=d_vv_{xx}+g(u,v),
  \end{array}\right.
\end{equation*}
and gave suitable conditions of functions $k,\p,g$ that are needed for the existence of traveling wave and pulse patterns. In their results, the chemotactic sensitivity function $\p$ is constructed as $\p(v)=\frac{d}{dv}(\ln\Phi(v))$, where $\Phi(v)$ is a rational function. Contrast to the results in \cite{EFN,JLW,KO,KS,LiW}, we focus on the general nonlinear gradient sensitivity model with small parameter \eqref{KS}. By using the technique of geometric singular perturbation (GSP for short) and trapping region, we can construct the traveling pulse solution for system \eqref{KS}.

It seems difficult to obtain the traveling pulse solutions for the minimal model. For instance, in \cite{HoS} the authors consider the one-dimensional minimal model as follows
\begin{equation}
\label{1dmin}
  \left\{\begin{split}
    u_t &= (u_x-uv_x)_x,\\
    v_t &= v_{xx}+u-g(v),
  \end{split}\right.
\end{equation}
Plug the traveling wave ansatz $\xi=x-st,\ s>0$ into the first equation, it follows that
$$
  u(\xi)=e^{v(\xi)}e^{-s\xi}.
$$
Suppose system \eqref{1dmin} has traveling pulse solution such that $(U(\xi),V(\xi))^T\to (u_{\pm},v_{\pm})^T$ as $\xi\to\pm\infty$ with $0\le u_{\pm},v_{\pm}<\infty$ and $u_-=u_+$. Then as $\xi\to\infty$, $u_+=e^{v_+}\cdot 0=0=u_-$. But when $\xi\to-\infty$,  we obtain
$$
  \lim\limits_{\xi\to-\infty}e^{V(\xi)}e^{-s\xi}=u_-=0
$$
which implies that $V(\xi)\to-\infty$ as $\xi\to-\infty$ and leads to a contradiction. In the contrary, if we consider the minimal model with small parameter as follows:
\begin{equation}
\label{3dmin}
  \left\{\begin{split}
    u_t &=\nabla\cdot(\e\nabla u-\chi u\nabla v),\\
    \e v_t &= \D v+u-g(v),
  \end{split}\right.
\end{equation}
where the function $g$ satisfies the condition ($A_2$). It turns out that system \eqref{3dmin} can be reduced to a special case for our system \eqref{KS} (see Remark \ref{rem2.1}), and we can obtain the traveling pulse solutions for system \eqref{3dmin} by using GSP theory and trapping region method. To our knowledge, this is the first result for the existence of the traveling pulse solutions for the minimal model.


In this article we also study the stability of the above traveling pulse solutions. The stability here means the linear stability (see, for example, \cite{AGJ} and \cite{Volpert}%
). That is, we consider the small perturbation of the traveling wave
solutions with exponentially decay behaviors at $\pm \infty $. Then
the linear stability is determined by the spectral analysis of the
linearized operators around the solutions. The stability analysis of
traveling waves were well-studied in reaction-diffusion systems
\cite{Volpert}. However, for Keller-Segel type systems there are few
results. Nagai and Ikeda \cite{NagaiIkeda} obtained the linearly
unstable results. There are also other results about stability, such
as \cite{RosenBaloga} in which they studied a special perturbation,
Li et al. \cite{LiLiWang} obtained the nonlinear stability for other
kind Keller-Segel equations. Li and Wang also had other results
about nonlinear stability which can be seen in the reference of
\cite{LiLiWang}. In a first step, we obtain the essential spectrum from the behaviors of linearized operators at $\pm\infty$ in the function space $X:=BUC(\RR;\RR^2)$, where%
\begin{equation}
\label{X}
  BUC(\RR;\RR^2):=\{\mathbf{p}:\RR\to\RR^2\big| \mathbf{p}\text{ is bounded and uniformly continuous}\}.
\end{equation}
By considering the standard theory of essential spectrum as stated in Theorem
A.2 in Chapter 5 of \cite{Henry} which give us the boundaries of the essential
spectrum, we can find that the essential
spectrum have elements with positive real parts, therefore the
traveling pulse solution is unstable in the Lyapunov sense as the
usual way. To establish the more detailed stability result, we also
consider the stability of pulse solutions in the weighted
function spaces 
\begin{equation}
\label{Xw}
  X_w:=\{\mathbf{p}:\RR\to\RR^2\big| w\mathbf{p}\in X\}
\end{equation}
with $w=e^{\rho z}$ for some $\rho >0$ to be determined later. In
the cases about stability in $X_{w},$ the traveling wave solutions cannot be
stabilized by introducing the weight and therefore we
can also find elements of essential spectrum with positive real
parts. Hence the traveling pulse solutions is still unstable in the space
$X_{w}$.

This paper is organized as follows. In Section 2 we prove the existence of traveling wave solutions. In Section 3 we give the proof of the instability parts.

\section{Traveling Wave Solutions to Keller-Segel equations}

In this section, we study the traveling wave solutions of system \eqref{KS}. 

\subsection{A dynamical system formulation}
Let $\xi=x-st$, we look for the profile of the traveling waves solution of system \eqref{KS} of the form
$$
  (u,v)^T(x,t)=(U,V)^T(\xi),\quad \xi=x-st
$$
with $U,V,\in C^{\infty}(\RR)$ satisfying boundary conditions
\begin{equation}
\label{bdcond}
  U(\pm\infty)=u_{\pm},\ V(\pm\infty)=v_{\pm},\ U'(\pm\infty)=V'(\pm\infty)=0,
\end{equation}
where $Z':=\frac{dZ}{d\xi}$ and $s$ represents the traveling speed of $(U,V)$. Here we are only interested in the case $U,V\ge 0$ due to the biological relevance.

\begin{dfn}
\label{def2.1}
The traveling wave profile $U$ is said to be a pulse if $u_-=u_+$ and is a front if $u_-\neq u_+$.
\end{dfn}

Plugging $(U,V)^T(\xi)$ into system \eqref{KS}, we have the following system
\begin{equation}
\label{KSODE}
\left\{
 \begin{aligned}
   &-sU' = (\epsilon U' - \chi U\phi(V'))',\\
   &-\epsilon sV' = V'' + U -g(V).
 \end{aligned}
 \right.
\end{equation}
Integrating the first equations in system \eqref{KSODE} and using the boundary conditions \eqref{bdcond}, we have that
\begin{equation*}
  -s(U-u_-)=\e U'-\chi U\p(V')+\chi u_-\p(0).
\end{equation*}
According to above equality, we have the following system
\begin{equation}
\label{KSODE2}
  \left\{\begin{split}
    \e U'   &= -sU+\chi U\p(V')+su_--\chi u_-\p(0), \\
    -\e s V'&= V''+U-g(V),
  \end{split}\right.
\end{equation}
To solve the equation \eqref{KSODE2} with boundary condition \eqref{bdcond}, we let $W:=V'$, then, system \eqref{KSODE2} can be reformulated as singular perturbation system,
\begin{equation}
\label{slow}
  \left\{\begin{split}
    \e U' &=-sU+\chi U\p(W)+su_--\chi u_-\p(0), \\
    W'     &=-\e s W-U+g(V), \\
    V'     &= W,
  \end{split}\right.
\end{equation}
The system \eqref{slow} is called {\it slow system}. Let us define the fast variable $\zeta :=\frac{\xi}{\e}$, then the corresponding {\it fast system} is read as
\begin{equation}
\label{fast}
  \left\{\begin{split}
    \dot{U} &=-sU+\chi U\p(W)+su_--\chi u_-\p(0), \\
    \dot{W} &=\e(-\e s W-U+g(V)), \\
    \dot{V} &=\e W,
  \end{split}\right.
\end{equation}
where $\dot{Z}:=\frac{dZ}{d\zeta}$. According to the boundary condition \eqref{bdcond}, we impose the boundary conditions for \eqref{slow} (or equivalently \eqref{fast}) as
\begin{equation}
\label{bdcond2}
  \left\{\begin{array}{l}
    \big(s-\chi\p(0)\big)(u_--u_+)=0, \\
    u_+-g(v_+)=u_--g(v_-)=0.
  \end{array}\right.
\end{equation}

To establish the existence of the traveling wave solution of \eqref{KS} is equivalent to find an orbit $\L_{\e}$ of \eqref{slow} (or equivalently \eqref{fast}) in $\RR_+\times\RR\times\RR_+$ that connects $(u_+,0,v_+)^T$ to $(u_-,0,v_-)^T$.

\begin{dfn}\
\label{def2.2}
\begin{itemize}
\item[\rm{(1)}] An asymptotic state is a solution $(U,V,W)^T(\xi)$ of system \eqref{slow} {\rm(}or equivalently \eqref{fast}{\rm)} with $\e=0$, and satisfies the limit $(U,V,W)^T(\xi)\to(u_{\pm},0,v_{\pm})$ as $\xi\to\pm\infty$.
\item[\rm{(2)}] An asymptotic state $(U,V,W)^T(\xi)$ admits a {\it traveling wave profile} if there is a solution $(U,V,W)^T(\xi,\e)$ of system \eqref{KS} such that the orbit of $(U,V,W)^T(\xi,\e)\to(U,V,W)^T(\xi)$ as $\e\to0$ in $L^1(K)$ for any compact subset $K$ of $\RR$.
\end{itemize}
\end{dfn}

In terms of singular perturbation theory, the orbit $\L_0$ of the asymptotic state is the {\it singular orbit} of its traveling wave profile $\L_{\e}$.

We employ the geometric singular perturbation theory to analyze the system \eqref{slow} (or equivalently \eqref{fast}) and show the existence of the traveling wave solution of \eqref{KS}. The basic ideas of GSP are:
\begin{itemize}
\item[(1)] Examining the dynamics of the limiting systems for $\e=0$ in different scales (slow and fast scales);
\item[(2)] constructing singular orbits that consist of orbits of limiting slow and limiting fast systems - those are nothing but asymptotic states;
\item[(3)] showing that there exist true orbits shadowing the singular orbits - the existence of traveling wave profiles of asymptotic states.
\end{itemize}

\begin{remark}
\label{rem2.1}
For the multi-dimensional minimal model with small parameter $0<\e\ll 1$ as the system \eqref{3dmin}
\begin{equation*}
  \left\{\begin{split}
    u_t &=\nabla\cdot(\e\nabla u-\chi u\nabla v),\\
    \e v_t &= \D v+u-g(v).
  \end{split}\right.
\end{equation*}
Suppose $(u,v)^T(\tx,t)=(U,V)^T(\xi),\ \xi=\tx\cdot\mathbf{\Psi}-st$ {\rm(}$\tx\in\RR^N,N\ge 1,t\ge 0${\rm)} is a traveling wave solution of \eqref{3dmin} connecting $(u_+,v_+)^T$ and $(u_-,v_-)^T$ and propagating in the direction $\mathbf{\Psi}\in\mathbb{S}^{N-1}$.
It is not hard to see that $U,V$ satisfies the ODE system as
\begin{equation*}
  \left\{\begin{aligned}
     &-sU' = (\epsilon U' - \chi UV')',\\
     &-\epsilon sV' = V'' + U -g(V),
  \end{aligned}\right.
\end{equation*}
which is a special case of \eqref{KSODE} when $\p$ is the identity function. Therefore, by showing the existence of the traveling wave solution of the system \eqref{KS}, we can also show the existence of the traveling wave solution of the multi-dimensional minimal model \eqref{3dmin} with small parameter $0<\e\ll 1$.
\end{remark}

\subsection{Critical manifold, and limiting fast dynamics}

From the boundary condition \eqref{bdcond2}, we have the following
two possible cases:
\begin{itemize}
\item[(1)] If $u_-\neq u_+$, then the traveling speed $s=\chi\p(0)$, and we have the traveling front solution.
\item[(2)] If $s\neq\chi\p(0)$, then $u_-=u_+$ and thus we obtain the traveling pulse solution.
\end{itemize}
In this paper, we only focus on the traveling pulse case. Without loss of generality, we assume that $s>\chi\p(0)$.

Let us consider the slow and fast system \eqref{slow} and \eqref{fast} from geometric singular perturbation point of view. When $\e\to0$, we have the limiting slow and fast systems
\begin{equation}
\label{limslow}
  \left\{\begin{split}
    0 &=-sU+\chi U\p(W)+su_--\chi u_-\p(0), \\
    W'     &=-U+g(V), \\
    V'     &= W,
  \end{split}\right.
\end{equation}
and
\begin{equation}
\label{limfast}
  \left\{\begin{split}
    \dot{U} &=-sU+\chi U\p(W)+su_--\chi u_-\p(0), \\
    \dot{W} &=0, \\
    \dot{V} &=0,
  \end{split}\right.
\end{equation}
respectively. Note that the limiting slow system \eqref{limslow} is an algebraic-differential system consisting of one algebraic equation and two differential equations. The solutions of such algebraic equation form a two-dimensional manifold in the phase space, which is so-called the {\it critical manifold} defined as
\begin{equation*}
\label{critical}
  \mathcal{M}_0=\{(U,V,W)^T\in\RR_+\times\RR\times\RR_+\big|U(\chi\p(W)-s)=u_-(\chi\p(0)-s)\},
\end{equation*}
and the orbits of limiting slow system lie on $\mathcal{M}_0$. On the other hand, it is clear the points on critical manifold $\mathcal{M}_0$ are equilibria of the limiting fast system \eqref{limfast}. Moreover, the linearization of the system \eqref{limfast} along $\mathcal{M}_0$ is
$$
  \left[\begin{array}{ccc}
    \frac{u_-}{U}(\chi\p(0)-s) & \chi U\p'(W) & 0 \\
    0 & 0 & 0 \\
    0 & 0 & 0
  \end{array}\right]
$$
The eigenvalue $\l$ in the transversal direction of $\mathcal{M}_0$ is
\begin{equation}
\label{treigen}
  \l=\frac{u_-}{U}(\chi\p(0)-s)<0,
\end{equation}
by our assumption $s>\chi\p(0)$. The critical manifold $\mathcal{M}_0$ is {\it normally hyperbolic} and each point on $\mathcal{M}_0$ is a global attractor for the system \eqref{limfast}. Hence, if there is a singular orbit $\L_0$ of limiting slow system \eqref{limslow} connecting $(u_-,0,v_-)^T$ and $(u_+,0,v_+)^T=(u_-,0,v_+)^T$, then, according to the geometric singular perturbation theory, the asymptotic state $\L_0$ admits a traveling wave profile $\L_{\e}$, such that $\L_{\e}\to\L_0$ as $\e\to 0$. Moreover, \eqref{treigen} shows that all points on $\mathcal{M}_0$ are global attractors for the system \eqref{limfast}. Because both $(u_-,0,v_-)^T,(u_-,0,v_+)^T\in\mathcal{M}_0$, the existence of the traveling pulse solution of \eqref{KS} is equivalent to the existence of an singular orbit $\L_0$ of the system \eqref{limslow} connecting $(u_-,0,v_-)^T$ and $(u_+,0,v_+)^T=(u_-,0,v_+)^T$.

\subsection{The limiting slow dynamics}

From the algebraic equation in \eqref{limslow}, we can represent $U$ in terms of $W$ as
\begin{equation}
\label{utow}
  U=h(W):=\frac{u_-(\chi\p(0)-s)}{\chi\p(W)-s}.
\end{equation}

\begin{lem}
\label{lem2.3}
$h$ is an strictly increasing function.
\end{lem}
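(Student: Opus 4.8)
The plan is to prove the claim by a direct differentiation of $h$ followed by a sign analysis of the resulting expression. Writing $h(W)=\dfrac{C}{\chi\phi(W)-s}$ with the constant $C:=u_-(\chi\phi(0)-s)$, the quotient rule together with the chain rule gives
\begin{equation*}
  h'(W)=\frac{-\,C\,\chi\,\phi'(W)}{\big(\chi\phi(W)-s\big)^2}.
\end{equation*}
Strict monotonicity will then follow once I check that the numerator is strictly positive and the denominator strictly positive on the relevant domain.

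First I would pin down the sign of the constant $C$. Since we are in the biologically relevant regime with cell density $u_->0$, and since we have reduced to the pulse case under the standing assumption $s>\chi\phi(0)$, the factor $\chi\phi(0)-s$ is strictly negative; hence $C<0$ and $-C>0$. Next, the factors $\chi>0$ (by hypothesis) and $\phi'(W)>0$ (by $(A_1)$) are positive, so the whole numerator $-C\,\chi\,\phi'(W)$ is strictly positive. The denominator $\big(\chi\phi(W)-s\big)^2$ is a square, hence nonnegative, and strictly positive as soon as $\chi\phi(W)\neq s$.

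The only point requiring a little care — and really the only obstacle, though a mild one — is to confirm that the denominator does not vanish on the portion of the critical manifold under consideration, so that $h$ is genuinely differentiable there. This follows from the monotonicity of $\phi$: at $W=0$ the quantity $\chi\phi(W)-s$ equals $\chi\phi(0)-s<0$, and since $\phi$ is increasing it remains $\chi\phi(W)-s<0$ for every $W$ with $\phi(W)<s/\chi$, which is precisely the range on which $U=h(W)$ stays nonnegative and the representation \eqref{utow} is meaningful (note that with $C<0$ and the denominator negative, $h$ is indeed positive, consistent with $\mathcal{M}_0\subset\mathbb{R}_+\times\mathbb{R}\times\mathbb{R}_+$). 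Restricting to this connected domain, I conclude $h'(W)>0$ throughout, so $h$ is strictly increasing, which completes the proof.
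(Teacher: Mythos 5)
Your proposal is correct and follows essentially the same route as the paper: differentiate $h$, use $u_->0$, $s>\chi\phi(0)$, and $\phi'>0$ to conclude $h'(W)>0$. The extra paragraph verifying that the denominator $\chi\phi(W)-s$ stays away from zero on the relevant range is a reasonable refinement that the paper omits, but it does not change the argument.
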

\begin{proof}
Since $s>\chi\p(0)$ and $\p'>0$, then
$$
  h'(W)=-\frac{u_-\chi\p'(W)(\chi\p(0)-s)}{(\chi\p(W)-s)^2}>0.
$$
\end{proof}

Plug \eqref{utow} into the limiting slow system, then we get the reduced system on the critical manifold $\mathcal{M}_0$ as
\begin{equation}
\label{redode}
  \left\{\begin{split}
    V'&= W, \\
    W'&=-h(W)+g(V).
  \end{split}\right.
\end{equation}
The solution for \eqref{limslow} can be obtained by solving the
system \eqref{redode} and using the equality \eqref{utow}. Hence, to
find the singular orbit of \eqref{limslow} connecting
$(u_+,0,v_-)^T$ to $(u_-,0,v_+)^T$ is equivalent to find an orbit of
\eqref{redode} connecting $(v_-,0)^T$ to $(v_+,0)^T$ for any fixed
$u_-$.

It is easy to see that the equilibria of \eqref{redode} are of the form $E=(V,0)^T$. We have the following proposition:

\begin{prps}
\label{prop2.4}
Suppose $E=(V,0)^T$ is an equilibrium of the system \eqref{redode}. Then $E$ is a saddle if $V>\b$ and $E$ is either a stable focus or a stable node if $V<\b$.
\end{prps}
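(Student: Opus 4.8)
The plan is to read off the type of $E$ from the linearization of the planar system \eqref{redode}. First I would record that evaluating \eqref{utow} at $W=0$ gives $h(0)=u_-$, so the second equation of \eqref{redode} forces the equilibrium value to satisfy $g(V)=u_-$; the location of $V$ relative to $\b$ is thus governed by the shape of $g$ prescribed in $(A_2)$. Linearizing \eqref{redode} at $E=(V,0)^T$ yields the Jacobian
\[
  J(E)=\begin{pmatrix} 0 & 1 \\ g'(V) & -h'(0) \end{pmatrix},
\]
whose characteristic polynomial is $\l^2+h'(0)\l-g'(V)=0$. By Lemma \ref{lem2.3} we have $h'>0$, so $\operatorname{tr}J(E)=-h'(0)<0$ unconditionally, while $\det J(E)=-g'(V)$ carries the entire case distinction.

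Second, I would split on the sign of $g'(V)$ supplied by $(A_2)$. If $V>\b$ then $g'(V)>0$, so $\det J(E)<0$ and the two eigenvalues are real with opposite signs; hence $E$ is a saddle. If $V<\b$ then $g'(V)<0$, so $\det J(E)>0$ together with $\operatorname{tr}J(E)<0$ forces both eigenvalues to have strictly negative real part, so $E$ is asymptotically stable.

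Third, to separate the two stable subcases I would examine the discriminant $\D=h'(0)^2+4g'(V)$ of the characteristic polynomial. When $\D>0$ the eigenvalues are real, distinct, and (by the sign argument above) both negative, giving a stable node; when $\D<0$ they form a complex-conjugate pair with negative real part, giving a stable focus. Since $g'(V)<0$ permits either sign of $\D$ depending on whether $|g'(V)|$ exceeds $h'(0)^2/4$, both subcases genuinely occur, which is precisely the ``stable focus or stable node'' alternative in the statement.

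The computation is elementary and I do not anticipate a serious obstacle; the only points requiring care are invoking $s>\chi\p(0)$ (through Lemma \ref{lem2.3}) to guarantee $h'(0)>0$ and hence the negative trace, and phrasing the node/focus dichotomy so that the sign of $\D$ is read off correctly rather than misinterpreted as an extra hypothesis on $g$.
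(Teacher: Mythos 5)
Your proposal is correct and follows essentially the same route as the paper: linearize \eqref{redode} at $E$, obtain the Jacobian $\begin{pmatrix} 0 & 1 \\ g'(V) & -h'(0)\end{pmatrix}$, and use $h'(0)>0$ from Lemma \ref{lem2.3} together with the sign of $g'(V)$ from $(A_2)$ to classify the eigenvalues. The paper writes the eigenvalues explicitly while you phrase it via trace, determinant, and discriminant, but this is only a cosmetic difference.
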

\begin{proof}
Observe that the linearization of the system \eqref{redode} around $E=(V,0)^T$ is
$$
  A(E)=\left[\begin{array}{cc}
    0 & 1 \\
    g'(V) & -h'(0)
  \end{array}\right].
$$
The eigenvalues and the eigenvectors of $A(E)$ are
$$
  \begin{array}{ll}
    \l_1(E)=\frac{-h'(0)+\sqrt{h'(0)^2+4g'(V)}}{2}, & \tv_1(E)=(1,\l_1(E))^T, \\
    \l_2(E)=\frac{-h'(0)-\sqrt{h'(0)^2+4g'(V)}}{2}, & \tv_2(E)=(1,\l_2(E))^T.
  \end{array}
$$
According to Lemma \ref{lem2.3}, we know that $h'(0)>0$. Because $g'<0$ if $V<\b$ and $g'>0$ if $V>\b$, we can conclude that
$$
  \left\{\begin{array}{ll}
    \l_1(E)>0,\ \l_2(E)<0 & \text{if }V>\b, \\
    \l_{1,2}(E)<0\text{ or }\l_{1,2}(E)\in\CC\text{ with } \re\l_{1,2}(E)<0 &\text{if }V<\b.
  \end{array}\right.
$$
Therefore, $E$ is a saddle if $V>\b$ and $E$ is either a stable focus or a stable node if $V<\b$.
\end{proof}

\begin{rem}
\label{rem2.5}
For the case $V=\b$, suppose that $E=(\b,0)^T$ is an equilibrium of the system \eqref{redode} and suppose $g'(\b)=0$. The eigenvalues of $A(E)$ are
$$
  \l_1(E)=0,\quad \l_2(E)=-h'(0)
$$
This indicates that $E$ is a non-hyperbolic equilibrium for the system \eqref{redode}.
\end{rem}

To determine all equilibria for the system \eqref{redode}, let us
denote $g_{\infty}:=\lim\limits_{V\to\infty}g(V)$. By substituting
$W=0$ into the equation $W'=0$, we get $-h(0)+g(V)=-u_-+g(V)=0$. Hence the equilibrium of
\eqref{redode} occurs when $g(V)=u_-$. Therefore, according to
Proposition \ref{prop2.4}, we have three cases of equilibria as described in the following
proposition:

\begin{prps}\
\label{prop2.6}
\begin{itemize}
  \item[\rm{(1)}] If $g_{\infty}<g(0)$ {\rm(}see Figure 1.{\rm)}:
  \begin{itemize}
    \item[\rm{(i)}] if $u_-<g(\b)$ or $u_->g(0)$, then there is no equilibrium for \eqref{redode}.
    \item[\rm{(ii)}] if $u_-=g(\b)$ or $g_{\infty}<u_-<g(0)$, then there exists only one equilibrium $E=(v,0)^T$ for \eqref{redode} with $v\le\b$, and $E$ is either a stable focus, stable node, or nonhyperbolic equilibrium.
    \item[\rm{(iii)}] if $g(\b)<u_-<g_{\infty}$, then there are two equilibria $E_{\pm}=(v_{\pm},0)^T$ for \eqref{redode} with $0<v_+<\b<v_-,\ u_-=g(v_{\pm})$, and $E_-$ is a saddle, $E_+$ is either a stable focus or a stable node.
  \end{itemize}
  \item[\rm{(2)}] If $g_{\infty}>g(0)$ or $g_{\infty}=+\infty$ {\rm(}see Figure 1.{\rm)}:
    \begin{itemize}
    \item[\rm{(i)}] if $u_-<g(\b)$ or $u_-\ge g_{\infty}$, then there is no equilibrium for \eqref{redode}.
    \item[\rm{(ii)}] if $u_-=g(\b)$ or $g(0)<u_-<g_{\infty}$, then there exists only one equilibrium $E=(v,0)^T$ for \eqref{redode} with $v\ge\b$, and $E$ is either a saddle, or a nonhyperbolic equilibrium.
    \item[\rm{(iii)}] if $g(\b)<u_-\le g(0)$, then there are two equilibria $E_{\pm}=(v_{\pm},0)^T$ for \eqref{redode} with $0\le v_+<\b<v_-,\ u_-=g(v_{\pm})$, and $E_-$ is a saddle, $E_+$ is either a stable focus or a stable node.
  \end{itemize}
  \item[\rm{(3)}] If $g_{\infty}=g(0)$ {\rm(}see Figure 1.{\rm)}:
    \begin{itemize}
    \item[\rm{(i)}] if $u_-<g(\b)$ or $u_->g(0)$, then there is no equilibrium for \eqref{redode}.
    \item[\rm{(ii)}] if $u_-=g(\b)$, then there exist only one equilibrium $E=(v,0)^T$ for \eqref{redode} with $v=\b,\ u_-=g(\b)$, and $E$ is a nonhyperbolic equilibrium.
    \item[\rm{(iii)}] if $g(\b)<u_-<g(0)$, then there are two equilibria $E_{\pm}=(v_{\pm},0)^T$ for \eqref{redode} with $0<v_+<\b<v_-,\ u_-=g(v_{\pm})$, and $E_-$ is a saddle, $E_+$ is either a stable focus or a stable node.
  \end{itemize}
\end{itemize}
\end{prps}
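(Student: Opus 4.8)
The plan is to reduce the entire classification to the geometry of the graph of $g$ met by the horizontal line at height $u_-$, and then read off the type of each equilibrium from Proposition \ref{prop2.4} and Remark \ref{rem2.5}. First I would record that $h(0)=u_-$, so setting $W=0$ in $W'=-h(W)+g(V)$ forces $g(V)=u_-$; hence every equilibrium is of the form $E=(v,0)^T$ with $g(v)=u_-$, and counting equilibria is exactly counting roots of $g(v)=u_-$ on $\RR_+$. By assumption ($A_2$), $g$ attains a unique minimum value $g(\b)$ at $V=\b$, is strictly decreasing on $[0,\b]$ from $g(0)$ down to $g(\b)$, and strictly increasing on $[\b,\infty)$ from $g(\b)$ up toward $g_{\infty}=\lim_{V\to\infty}g(V)$, the latter being approached but \emph{not} attained. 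Thus the range of $g$ on the left branch $[0,\b]$ is $[g(\b),g(0)]$ and on the right branch $[\b,\infty)$ it is $[g(\b),g_{\infty})$.

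Next I would count intersections using the intermediate value theorem together with the strict monotonicity on each branch, which guarantees existence and at most one root per branch; since $g$ is only $C^0$, it is monotonicity rather than differentiability that supplies this. A root lies on the left branch precisely when $g(\b)\le u_-\le g(0)$ and on the right branch precisely when $g(\b)\le u_-<g_{\infty}$, with $u_-=g(\b)$ giving the single shared point $v=\b$. The three cases of the proposition correspond exactly to the three orderings of the right-branch ceiling $g_{\infty}$ against the left-branch ceiling $g(0)$. When $g_{\infty}<g(0)$ the left branch reaches higher, so for $u_-$ strictly between $g_{\infty}$ and $g(0)$ only the left branch is hit (one equilibrium with $v<\b$), whereas for $u_-$ strictly between $g(\b)$ and $g_{\infty}$ both branches are hit (two equilibria straddling $\b$). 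The symmetric bookkeeping for $g_{\infty}>g(0)$ (or $g_{\infty}=+\infty$), where the right branch reaches higher, and for the borderline $g_{\infty}=g(0)$, where both ceilings coincide, produces the remaining stated ranges and the accompanying sign conditions $v<\b$, $v=\b$, or $v>\b$.

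Finally I would classify each located equilibrium by invoking the earlier linearization: by Proposition \ref{prop2.4}, $(v,0)^T$ is a saddle when $v>\b$ and a stable node or stable focus when $v<\b$, and by Remark \ref{rem2.5} it is nonhyperbolic when $v=\b$. Matching these types against the locations found in the second step yields every stability assertion: a right-branch root $v_->\b$ is always a saddle, a left-branch root $v_+<\b$ is always stable, and the degenerate single root $v=\b$ (occurring when $u_-=g(\b)$) is nonhyperbolic. I expect the genuine work to be purely combinatorial bookkeeping: correctly tracking strict versus non-strict inequalities at the endpoints $u_-=g(\b)$, $g(0)$, $g_{\infty}$, and in particular remembering that $g_{\infty}$ is a limiting supremum that is not attained, so that $u_-=g_{\infty}$ (or $u_-\ge g_{\infty}$) contributes no root on the right branch. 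Arranging these boundary alternatives so that each configuration of $(u_-,g_{\infty})$ lands in exactly one listed subcase is the main obstacle, but it becomes routine once the shape of $g$ is fixed.
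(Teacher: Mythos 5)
Your proposal is correct and follows essentially the same route as the paper, which gives no separate proof but simply observes in the preceding paragraph that $h(0)=u_-$ forces equilibria to satisfy $g(V)=u_-$ and then appeals to Proposition \ref{prop2.4}, Remark \ref{rem2.5}, and the shape of $g$ in Figure 1. You merely write out explicitly the root-counting on the two monotone branches of $g$ (including the correct observation that $g_{\infty}$ is a non-attained supremum) that the paper leaves to the figure.
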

\begin{fig}
\label{fig1}
\includegraphics[scale=0.3]{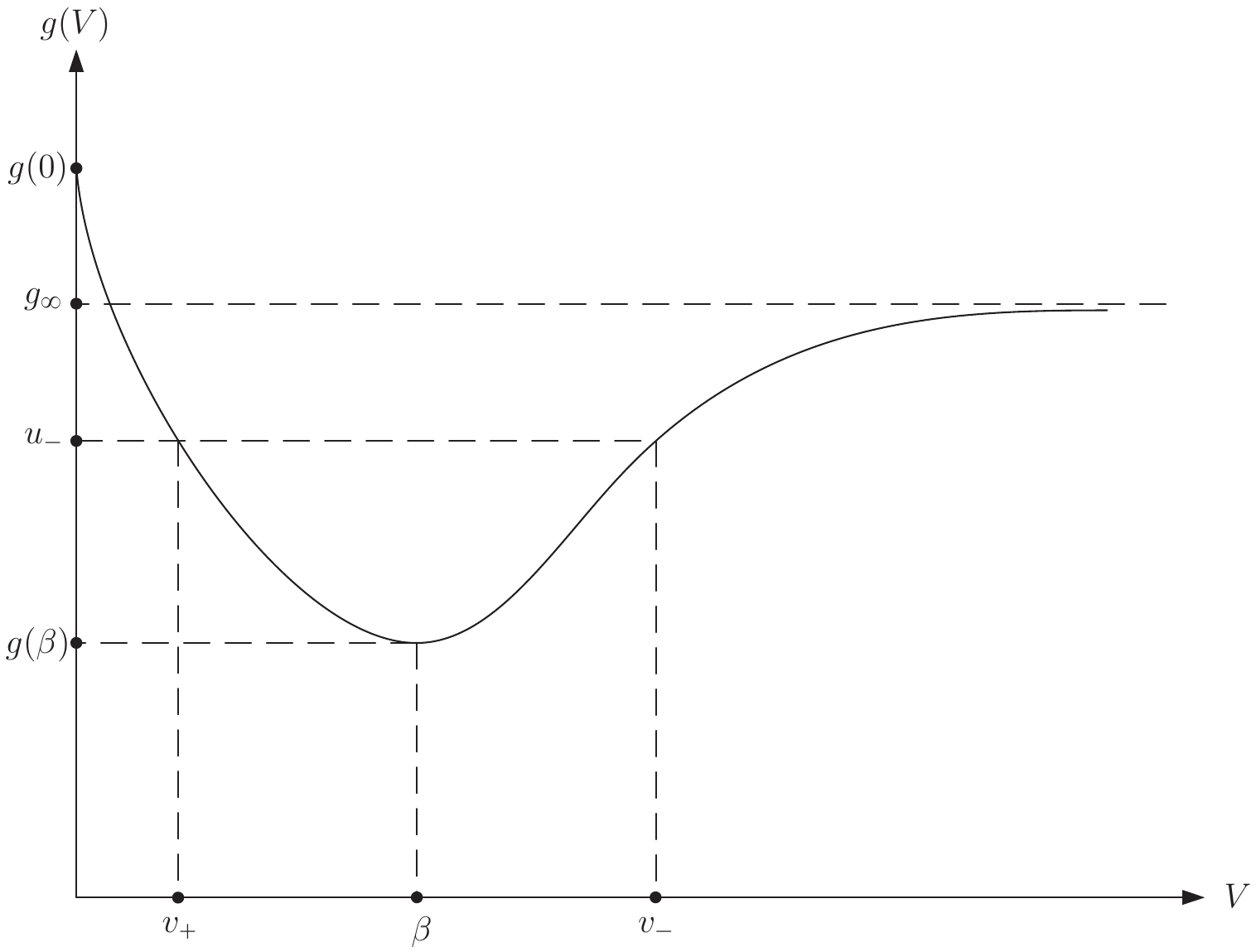}
\includegraphics[scale=0.3]{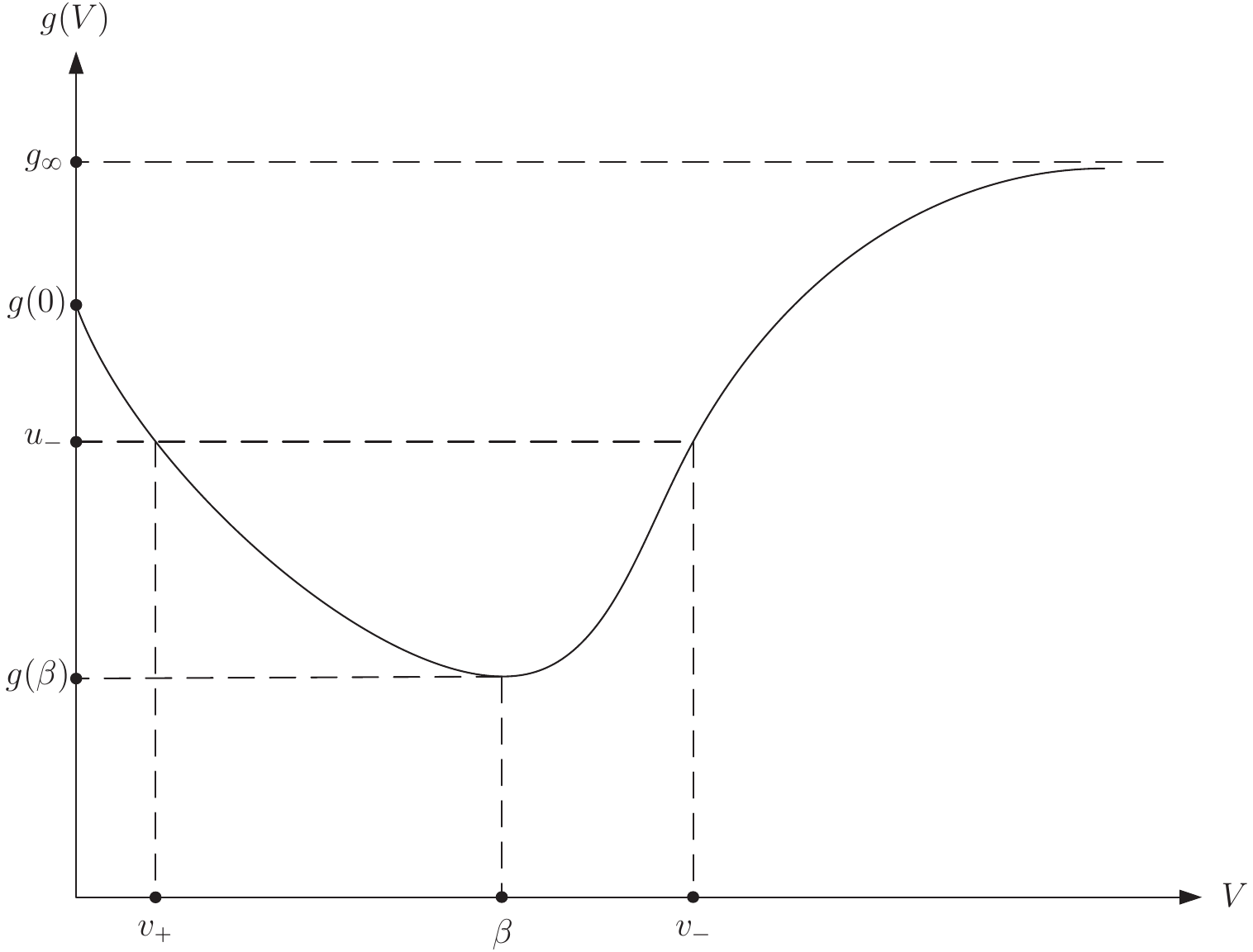}
\includegraphics[scale=0.3]{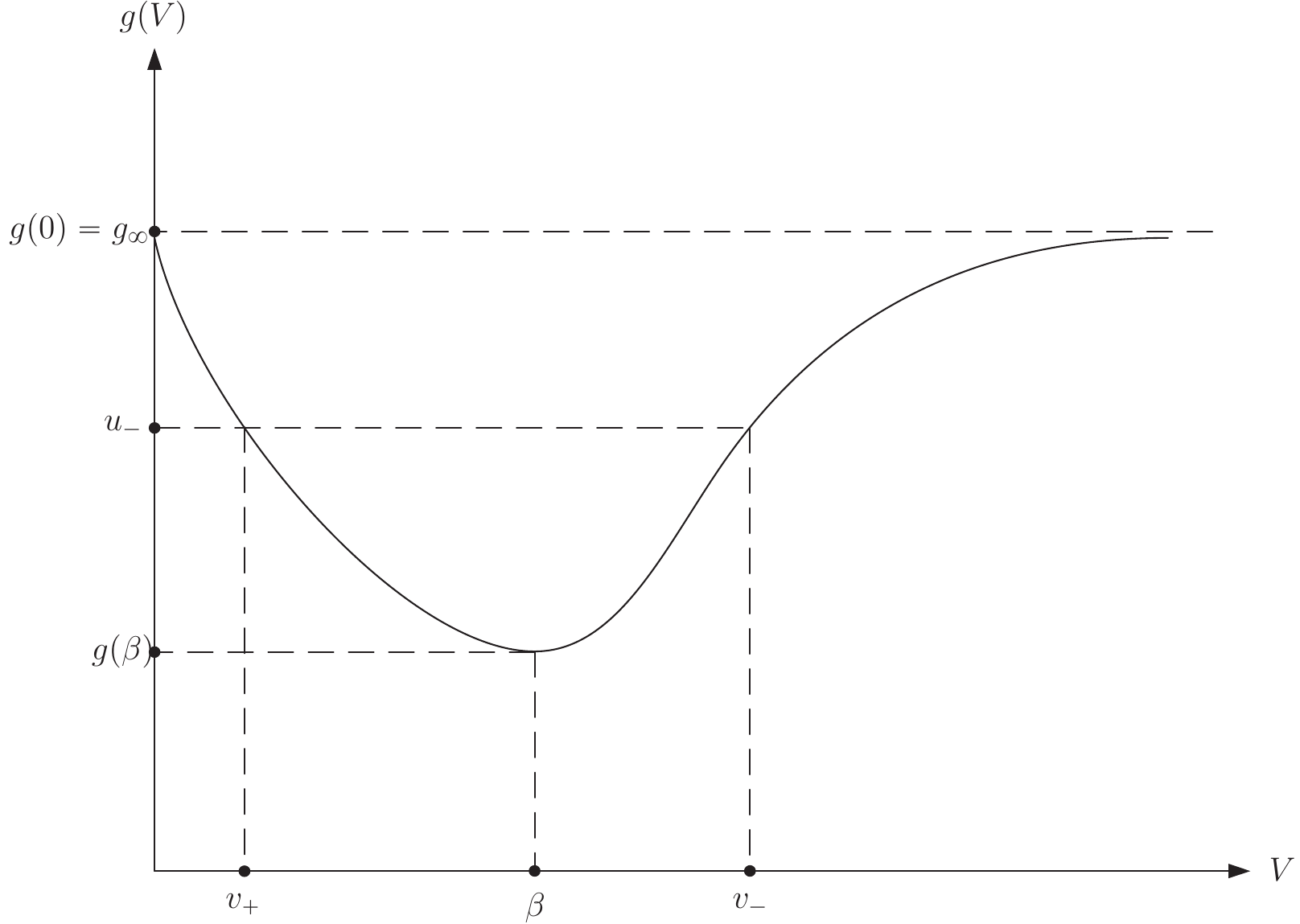}\\
Figure 1. The graph of $g$ from case (1) to case (3) respectively.\medskip
\end{fig}
\medskip

Let us denote
$$
  F(V,W):=(W,-h(W)+g(V))^T.
$$

\begin{prps}
\label{prop2.7} If there is only one equilibrium for the system \eqref{redode}, then \eqref{redode} has no closed orbit lying entirely in $\RR_+\times\RR$.
\end{prps}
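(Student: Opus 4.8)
The plan is to rule out closed orbits of the planar reduced system \eqref{redode} by a Bendixson--Dulac divergence argument, the crucial input being the monotonicity of $h$ from Lemma \ref{lem2.3}. Writing the vector field as $F(V,W)=(W,\,-h(W)+g(V))$, I would first compute
\begin{equation*}
  \operatorname{div}F=\frac{\partial}{\partial V}(W)+\frac{\partial}{\partial W}\big(-h(W)+g(V)\big)=-h'(W).
\end{equation*}
By Lemma \ref{lem2.3}, $h'>0$ everywhere $h$ is defined, so $\operatorname{div}F=-h'(W)<0$ at every point of $\RR_+\times\RR$. Since this region is simply connected, Bendixson's criterion already forbids any closed orbit contained in it; the remaining work is just to make the argument self-contained given the weak regularity of $g$.

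To turn this into a rigorous proof I would argue by contradiction. Suppose $\g$ is a closed orbit lying entirely in $\RR_+\times\RR$ and let $D$ be the bounded region it encloses. Because the set where the system is defined and $V>0$ is an intersection of half-planes, hence convex, and it contains $\g$, it contains the convex hull of $\g$ and therefore $D$; in particular $h'>0$ throughout $D$. Along the trajectory $\g$ one has $dV=W\,d\xi$ and $dW=(-h(W)+g(V))\,d\xi$, so the one-form $\om:=F_1\,dW-F_2\,dV$ satisfies $\om=(F_1F_2-F_2F_1)\,d\xi=0$ on $\g$; hence $\oint_\g\om=0$.

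The one genuine obstacle is that $g$ is only continuous, so Green's theorem cannot be applied to $\om$ verbatim. I would sidestep this by splitting $\om=\big(W\,dW+h(W)\,dV\big)-g(V)\,dV$: the term $g(V)\,dV=dG(V)$ with $G(V):=\int^V g$ is exact (here $G\in C^1$ since $g\in C^0$), so its integral around the closed curve $\g$ vanishes, while the remaining one-form involves only the $C^1$ function $h$, and Green's theorem gives $\oint_\g\big(W\,dW+h(W)\,dV\big)=-\iint_D h'(W)\,dV\,dW$. Combining these yields $\iint_D h'(W)\,dV\,dW=0$, which contradicts $h'>0$ on the region $D$ of positive area. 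Thus no closed orbit exists. I expect the regularity bookkeeping in this last step---isolating the exact part $-g(V)\,dV$ so that Green's theorem touches only the smooth remainder---to be the sole point requiring care; notably, this divergence argument does not actually use the single-equilibrium hypothesis, which is presumably included only for the narrative leading to the connecting-orbit construction.
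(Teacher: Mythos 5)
Your proposal is correct and uses essentially the same argument as the paper: the divergence of $F$ is $-h'(W)<0$ by Lemma \ref{lem2.3}, and Bendixson's criterion rules out closed orbits in the convex region $\RR_+\times\RR$. Your extra care in isolating the exact form $g(V)\,dV$ so that Green's theorem is applied only to the $C^1$ part is a legitimate refinement (the paper applies Bendixson directly even though $g$ is only assumed $C^0$), and your observation that the single-equilibrium hypothesis is not actually used is also accurate.
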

\begin{proof}
According to Lemma \ref{lem2.3},
$$
  \nabla\cdot F=-h'(W)=\frac{\chi u_-(\chi\p(0)-s)}{(\chi\p(W)-s)^2}<0
$$
By Bendixson's criteria, system \eqref{redode} has no closed orbit lying entirely in $\RR_+\times\RR$.
\end{proof}

According to Proposition \ref{prop2.7}, there is no closed trajectory from $E$ to $E$ itself when $E$ is the only equilibrium of \eqref{redode}, which indicates that the case of traveling pulse solution with $v_-=v_+$ never occurs. According to Proposition \ref{prop2.6}, if there are two equilibria $E_{\pm}=(v_{\pm},0)^T$ for the system \eqref{redode}, then $v_+<\b<v_-$ and $E_+$ is a saddle, $E_-$ is a stable node or a stable focus. To find the orbit for the system \eqref{redode} connecting $E_-$ to $E_+$, we need to analyze the dynamics of the system \eqref{redode}. Hereafter, let us denote
$$
  g^*:=\min\{g(0),g_{\infty}\}.
$$

It is clear that the isocline for $V'=0$ is $W=0$. By setting
$W'=0$, we have $h(W)=g(V)$, that is,
$\frac{u_-(\chi\p(0)-s)}{\chi\p(W)-s}=g(V)$, it follows that $\p(W)=\frac{s}{\chi}+\frac{u_-}{g(V)}\big(\p(0)-\frac{s}{\chi}\big)$.
Since $\p$ is strictly monotonic with $\p'>0$, $\p$ has an inverse
function, and we can represent $W$ in terms of $V$ as
\begin{equation*}
\label{wtov}
  W=\p^{-1}(B(V)),\ \text{where}\ B(V):=\frac{s}{\chi}+\frac{u_-}{g(V)}\Big(\p(0)-\frac{s}{\chi}\Big).
\end{equation*}

\begin{prps}
\label{prop2.8}
For any fixed $u_-$ with $g(\b)<u_-\le g^*,\ u_-=g(v_{\pm})$, the isocline $W'=0$ is strictly decreasing if $V<\b$ and strictly increasing if $V>\b$ and $\p^{-1}(B(v_{\pm}))=0$. Moreover, $W'>0$ if and only if $W<\p^{-1}(B(V))$.
\end{prps}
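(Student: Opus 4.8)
The plan is to read everything off the explicit formula $B(V)=\frac{s}{\chi}+\frac{u_-}{g(V)}\bigl(\phi(0)-\frac{s}{\chi}\bigr)$, combined with the two monotonicity inputs already available: $\phi$ (hence $\phi^{-1}$) is strictly increasing by $(A_1)$, and $g$ is strictly decreasing on $\{V<\beta\}$ and strictly increasing on $\{V>\beta\}$ by $(A_2)$. The single sign that drives the whole argument is $\phi(0)-\frac{s}{\chi}=\frac{\chi\phi(0)-s}{\chi}<0$, which holds because we are in the pulse regime $s>\chi\phi(0)$ fixed at the start of the section.

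First I would settle the monotonicity of $B$ itself. Since the coefficient $\phi(0)-\frac{s}{\chi}$ is negative and $u_->0$, the value $B(V)$ is a strictly increasing function of the number $g(V)$: a larger $g(V)$ makes $\frac{u_-}{g(V)}$ smaller, hence the negative term less negative. Composing this with $(A_2)$: for $V<\beta$ the quantity $g(V)$ decreases as $V$ increases, so $B(V)$ strictly decreases, while for $V>\beta$ the quantity $g(V)$ increases, so $B(V)$ strictly increases. Because $\phi^{-1}$ is strictly increasing, the isocline $W=\phi^{-1}(B(V))$ inherits exactly these monotonicities, which is the first assertion. For the value at the equilibria I would use $u_-=g(v_\pm)$, so that $\frac{u_-}{g(v_\pm)}=1$ and $B(v_\pm)=\frac{s}{\chi}+\bigl(\phi(0)-\frac{s}{\chi}\bigr)=\phi(0)$; applying $\phi^{-1}$ gives $\phi^{-1}(B(v_\pm))=\phi^{-1}(\phi(0))=0$.

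For the sign of $W'$ I would argue directly from $W'=-h(W)+g(V)$, so that $W'>0$ is equivalent to $h(W)<g(V)$. By the very construction of the isocline, $\phi^{-1}(B(V))$ is the unique $W$ satisfying $h(W)=g(V)$, that is $h\bigl(\phi^{-1}(B(V))\bigr)=g(V)$. Since $h$ is strictly increasing by Lemma \ref{lem2.3}, the inequality $h(W)<h\bigl(\phi^{-1}(B(V))\bigr)$ holds if and only if $W<\phi^{-1}(B(V))$, which is precisely the claimed equivalence.

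I do not expect a serious obstacle here: the proof is careful bookkeeping of three composed monotonicities together with one sign, and the main care needed is assembling the chain ``$B$ increasing in $g(V)$'' $\to$ ``monotonicity in $V$ via $(A_2)$'' $\to$ ``monotonicity of $W$ via $\phi^{-1}$'' in the correct order. The one technical point I would flag but not dwell on is that $\phi^{-1}(B(V))$ is only defined where $B(V)$ lies in the range of $\phi$; this is guaranteed on the relevant portion of phase space by the identity $B(v_\pm)=\phi(0)$ together with the standing restriction $g(\beta)<u_-\le g^*$ and the existence of $v_\pm$ from Proposition \ref{prop2.6}.
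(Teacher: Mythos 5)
Your proof is correct and follows essentially the same route as the paper's: the value $B(v_{\pm})=\phi(0)$ is computed identically, and the equivalence $W'>0\iff W<\phi^{-1}(B(V))$ rests on the same identity $h(\phi^{-1}(B(V)))=g(V)$ together with the strict monotonicity of $h$ from Lemma \ref{lem2.3}. The only minor difference is in the first assertion, where the paper differentiates that identity to get $\sgn(B'(V))=\sgn(g'(V))$ from $h'>0$ and $\phi'>0$, whereas you read the monotonicity of $B$ in $g(V)$ directly off the formula via the sign of $\phi(0)-\tfrac{s}{\chi}$ --- slightly more elementary and, incidentally, not requiring $g$ to be differentiable.
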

\begin{proof}
It is clear that
$$
  B(v_{\pm})=\frac{s}{\chi}+\frac{u_-}{g(v_{\pm})}\Big(\p(0)-\frac{s}{\chi}\Big)=\p(0),
$$
so we have that $\p^{-1}(B(v_{\pm}))=0$.
For any $V\in\RR_+$, we have that
$$
  h(\p^{-1}(B(V)))=\frac{u_-(\chi\p(0)-s)}{\chi\p(\p^{-1}(B(V)))-s}=\frac{u_-(\chi\p(0)-s)}{\chi B(V)-s}=\frac{u_-(\chi\p(0)-s)}{s+\frac{u_-}{g(V)}\big(\chi\p(0)-s\big)-s}=g(V).
$$
Thus,
$$
  g'(V)=\frac{h'(\p^{-1}(B(V)))\cdot B'(V)}{\p'(\p^{-1}(B(V)))}.
$$
According to our assumption of $\p$ and Lemma \ref{lem2.3}, we have that $\p'>0,\ h'>0$, and thus $\sgn(B'(V))=\sgn(g'(V))$, that is, $B'(V)<0$ if $V<\b$ and $B'(V)>0$ is $V>\b$. Furthermore, if $W<\p^{-1}(B(V))$, since
$h\circ\p^{-1}\circ B=g$ and $h'>0$, then
$$
  W'=-h(W)+g(V)>-h(\p^{-1}(B(V)))+g(V)=-g(V)+g(V)=0.
$$
Consequently, we also have $W'<0$ if and only if $W>\p^{-1}(B(V))$.
\end{proof}

It is clear that $V'>0$ if $W>0$ and $V'\leq0$ if $W<0$. According to Proposition \ref{prop2.8}, the vector field for the system \eqref{redode} is shown in Figure \ref{fig2}.
\begin{fig}
\label{fig2}
\includegraphics[scale=0.5]{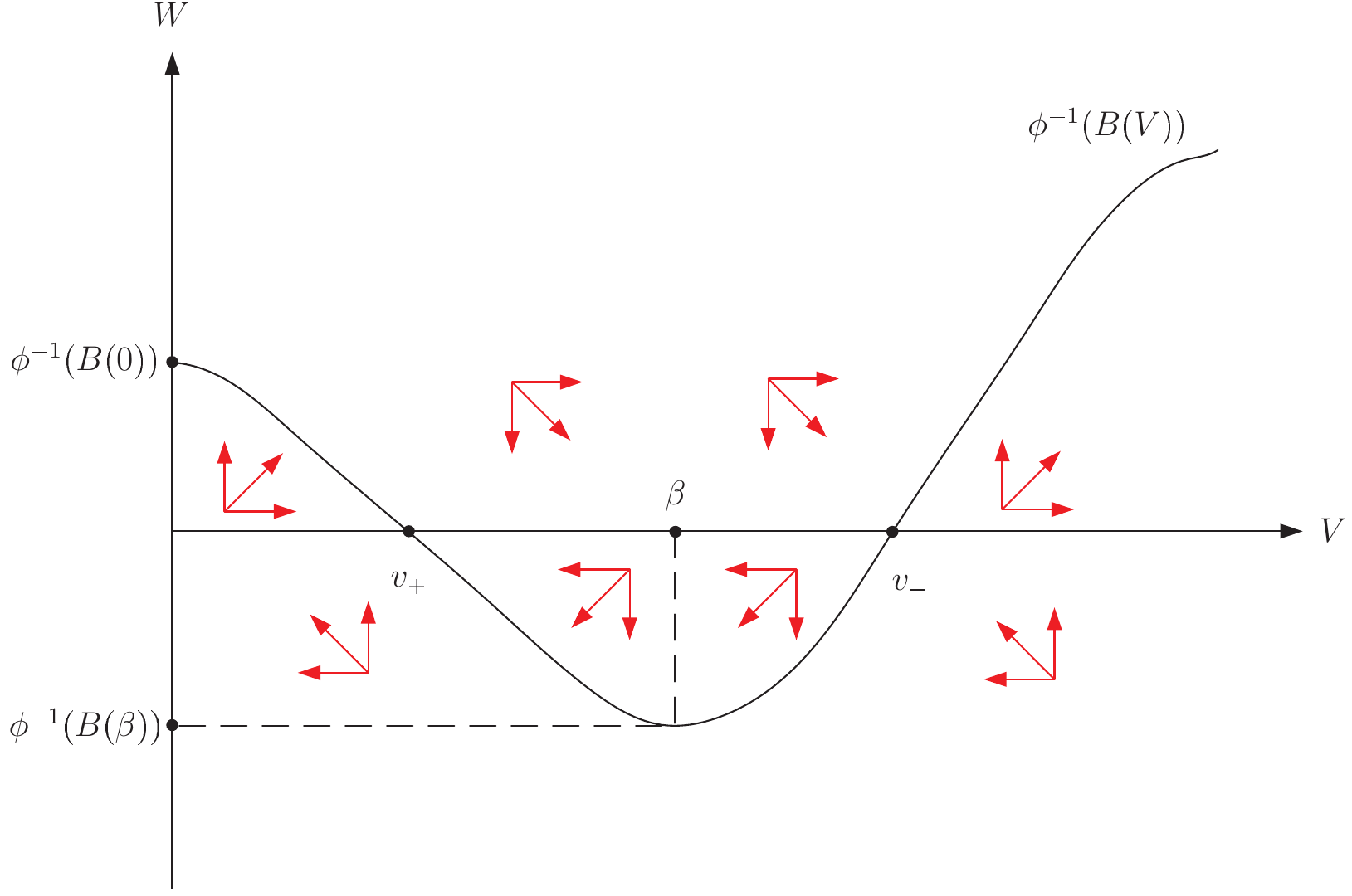}\\
Figure 2. Projection of the vector fields for the limiting slow
system \eqref{limslow} on VW-plane.\medskip
\end{fig}

\subsection{The trapping region}

In this subsection, for any fixed $u_-$ with $g(\b)<u_-\le g^*,\ u_-=g(v_{\pm})$, which depending on the case in Proposition \ref{prop2.6} (i.e. $g^*=g(0)$ or $g^*=g_{\infty}$), we find the orbit of \eqref{redode} connecting $E_-=(v_-,0)^T$ and $E_+=(v_+,0)^T$ by constructing a suitable trapping region, and using the Poincar\'{e}-Bendixson Theorem, and therefore prove the existence of the traveling pulse solution for system \eqref{KS} by geometric singular perturbation theory.

We first consider the case of $u_-$ satisfying $g(\b)<u_-<g^*,\ u_-=g(v_{\pm})$. In this case, we have $0<v_+<\b<v_-$. Moreover, by Proposition \ref{prop2.8}, we obtain
$$
  \p^{-1}(B(\b))<0,\qquad\p^{-1}(B(0))>0.
$$
Let us define the functions
\begin{equation*}
  \begin{split}
    J(V)&:=V(g(V)-u_-)\text{ on }[0,v_+], \\
    Q(V)&:=(v_--V)^2\Big(\inf_{z\in(V,v_-]}g'(z)\Big)\text{ on }(\b,v_-).
  \end{split}
\end{equation*}

\begin{lem}\
\label{lem2.9}
\begin{itemize}
  \item[{\rm(1)}]  If $\frac{1}{v_+}\int_0^{v_+}J(V)dV>\p^{-1}(B(\b))^2$, then there exists $v_*\in(0,v_+)$ such that $J(v_*)>\p^{-1}(B(\b))^2$.
  \item[{\rm(2)}]  If $\frac{1}{v_--\b}\int_{\b}^{v_-}Q(V)dV>\p^{-1}(B(0))^2$, then there exists $v^*\in(\b,v_-)$ such that $Q(v^*)>\p^{-1}(B(0))^2$.
\end{itemize}
\end{lem}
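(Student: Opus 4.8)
The plan is to recognize that statements (1) and (2) are two instances of a single elementary fact: if the mean value of an integrable function over an interval strictly exceeds a constant, then the function itself must strictly exceed that constant at some point of the interval. I would prove both by contraposition, the only part-specific work being to check integrability of the integrand.

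For part (1), set $c_1 := \p^{-1}(B(\b))^2$. Since $g\in C^0$ by $(A_2)$, the function $J(V)=V(g(V)-u_-)$ is continuous on $[0,v_+]$. Suppose toward a contradiction that $J(V)\le c_1$ for every $V\in(0,v_+)$. Then, since the endpoints carry no mass,
$$
  \frac{1}{v_+}\int_0^{v_+} J(V)\,dV \le \frac{1}{v_+}\int_0^{v_+} c_1\,dV = c_1,
$$
contradicting the hypothesis. Hence $J(v_*)>c_1$ for some $v_*\in(0,v_+)$. Equivalently, since $J$ is continuous one may invoke the mean value theorem for integrals to produce a $v_*$ at which $J$ equals its average, which by hypothesis exceeds $c_1$.

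For part (2) the only extra care is to verify that $Q$ is integrable, because of the infimum in its definition. As $V$ increases the set $(V,v_-]$ shrinks, so $m(V):=\inf_{z\in(V,v_-]}g'(z)$ is non-decreasing in $V$, hence measurable and bounded on $[\b,v_-]$; since $(v_--V)^2$ is continuous, the product $Q(V)=(v_--V)^2 m(V)$ is measurable and bounded, therefore integrable on $[\b,v_-]$. Writing $c_2:=\p^{-1}(B(0))^2$, the same contraposition applies verbatim: if $Q(V)\le c_2$ throughout $(\b,v_-)$ then $\frac{1}{v_--\b}\int_{\b}^{v_-}Q(V)\,dV\le c_2$, against the hypothesis, so $Q(v^*)>c_2$ for some $v^*\in(\b,v_-)$.

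There is essentially no serious obstacle here; the argument is purely an averaging estimate, and the existence of the desired point is immediate once the contradiction is set up. The only step deserving a moment's attention is the measurability and boundedness of $Q$ in part (2), which is secured by the monotonicity of the infimum $m(V)$; with $Q$ known to be integrable, the conclusion is identical in form to part (1).
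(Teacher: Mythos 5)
Your proof is correct and takes essentially the same route as the paper: both argue by contraposition, observing that if the integrand were bounded above by $\p^{-1}(B(\b))^2$ (resp.\ $\p^{-1}(B(0))^2$) throughout the interval, then its average could not exceed that constant. Your additional remark on the measurability and boundedness of $Q$ via the monotonicity of $m(V)=\inf_{z\in(V,v_-]}g'(z)$ is a small refinement that the paper leaves implicit.
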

\begin{proof}
(1) Suppose $J(V)\le\p^{-1}(B(\b))^2$ for all $V\in[0,v_+]$. Then
$$
  \int_0^{v_+}J(V)dV\le\int_0^{v_+}\p^{-1}(B(\b))^2dV=v_+\p^{-1}(B(V))^2.
$$
The proof is complete. The result of (2) can be proved in a similar argument.
\end{proof}

\begin{prps}
\label{prop2.10}
Let us denote
\begin{eqnarray*}
    &&s_1:=\chi\Big(1-\frac{u_-}{g(\b)}\Big)^{-1}\Big[\p\Big(-\sqrt{\frac{1}{v_+}\int_0^{v_+}J(V)dV}\,\Big)-\frac{u_-\p(0)}{g(\b)}\Big], \label{s1} \\
    &&s_2:=\chi\Big(1-\frac{u_-}{g(0)}\Big)^{-1}\Big[\p\Big(\sqrt{\frac{1}{v_--\b}\int_{\b}^{v_-}Q(V)dV}\,\Big)-\frac{u_-\p(0)}{g(0)}\Big]. \label{s2}
\end{eqnarray*}
If $\chi\p(0)<s<s_*$ with $s_*:=\min\{s_1,s_2\}$, then there exist $v_*\in(0,v_+),\ v^*\in(\b,v_-)$ such that $-\sqrt{J(v_*)}<\p^{-1}(B(\b))<0$ and $0<\p^{-1}(B(0))<\sqrt{Q(v^*)}$.
\end{prps}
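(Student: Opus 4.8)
The plan is to reduce the two assertions to the two existence statements already packaged in Lemma \ref{lem2.9}, so that the only genuine work is to translate the speed bound $\chi\p(0)<s<s_*$ into the two averaged integral inequalities that serve as the hypotheses of that lemma. Since the conclusions $-\sqrt{J(v_*)}<\p^{-1}(B(\b))<0$ and $0<\p^{-1}(B(0))<\sqrt{Q(v^*)}$ are, after invoking the already-established signs $\p^{-1}(B(\b))<0$ and $\p^{-1}(B(0))>0$, equivalent to the squared inequalities $\p^{-1}(B(\b))^2<J(v_*)$ and $\p^{-1}(B(0))^2<Q(v^*)$, it suffices to verify the hypotheses $\frac{1}{v_+}\int_0^{v_+}J(V)\,dV>\p^{-1}(B(\b))^2$ and $\frac{1}{v_--\b}\int_\b^{v_-}Q(V)\,dV>\p^{-1}(B(0))^2$ of Lemma \ref{lem2.9}.

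First I would record the algebraic identities $B(\b)=\frac{s}{\chi}\big(1-\frac{u_-}{g(\b)}\big)+\frac{u_-\p(0)}{g(\b)}$ and $B(0)=\frac{s}{\chi}\big(1-\frac{u_-}{g(0)}\big)+\frac{u_-\p(0)}{g(0)}$, obtained directly from the definition of $B$. The point is that $s_1$ and $s_2$ are built precisely so that, upon substituting these identities, the inequality $s<s_1$ becomes $B(\b)>\p(-P_1)$ and $s<s_2$ becomes $B(0)<\p(P_2)$, where I write $P_1:=\sqrt{\frac{1}{v_+}\int_0^{v_+}J(V)\,dV}$ and $P_2:=\sqrt{\frac{1}{v_--\b}\int_\b^{v_-}Q(V)\,dV}$; both are real since $J\ge 0$ on $[0,v_+]$ (as $g$ decreases to $g(v_+)=u_-$ there) and $Q\ge 0$ on $(\b,v_-)$.

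The one place a sign slip would wreck the argument is the sign of the factor one divides by. Because $g(\b)<u_-$, the factor $1-\frac{u_-}{g(\b)}$ is negative, so clearing it from $s<s_1$ reverses the inequality; whereas because $u_-\le g^*\le g(0)$ the factor $1-\frac{u_-}{g(0)}$ is positive, so clearing it from $s<s_2$ preserves it. Keeping this straight is exactly why the bound coming from $s_1$ produces a \emph{lower} bound $B(\b)>\p(-P_1)$ on the negative quantity $\p^{-1}(B(\b))$, while the bound from $s_2$ produces an \emph{upper} bound $B(0)<\p(P_2)$ on the positive quantity $\p^{-1}(B(0))$. Monotonicity of $\p^{-1}$ (from $(A_1)$) then gives $\p^{-1}(B(\b))>-P_1$ and $\p^{-1}(B(0))<P_2$; squaring — legitimate once the signs $\p^{-1}(B(\b))<0<P_1$ and $0<\p^{-1}(B(0))<P_2$ are in hand — delivers precisely the two averaged-integral hypotheses.

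Finally I would feed these two inequalities into Lemma \ref{lem2.9} to obtain $v_*\in(0,v_+)$ with $J(v_*)>\p^{-1}(B(\b))^2$ and $v^*\in(\b,v_-)$ with $Q(v^*)>\p^{-1}(B(0))^2$, and then take square roots, using $\p^{-1}(B(\b))<0$ to pass to $-\sqrt{J(v_*)}<\p^{-1}(B(\b))<0$ and $\p^{-1}(B(0))>0$ to pass to $0<\p^{-1}(B(0))<\sqrt{Q(v^*)}$. I expect no real obstacle beyond bookkeeping: the entire substance lies in unwinding the definitions of $s_1,s_2$ with the correct sign conventions, after which the conclusion is immediate from Lemma \ref{lem2.9}.
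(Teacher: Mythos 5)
Your proposal is correct and follows essentially the same route as the paper's own proof: unwind the definitions of $s_1,s_2$ with the sign observation $1-\frac{u_-}{g(\b)}<0<1-\frac{u_-}{g(0)}$ to get $B(\b)>\p(-P_1)$ and $B(0)<\p(P_2)$, apply the monotone $\p^{-1}$ together with the signs $\p^{-1}(B(\b))<0<\p^{-1}(B(0))$ from Proposition \ref{prop2.8}, square to obtain the averaged-integral hypotheses, and invoke Lemma \ref{lem2.9}. No gaps.
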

\begin{proof}
Since $g(\b)<u_-<g(0)$, then $1-\frac{u_-}{g(\b)}<0<1-\frac{u_-}{g(0)}$. If $s<s_*$, then $s<s_1,\ s<s_2$ and thus.
\begin{equation*}
  \begin{split}
    B(\b)&=\frac{s}{\chi}+\frac{u_-}{g(\b)}\Big(\p(0)-\frac{s}{\chi}\Big)>\p\Big(-\sqrt{\frac{1}{v_+}\int_0^{v_+}J(V)dV}\,\Big), \\
    B(0)&=\frac{s}{\chi}+\frac{u_-}{g(0)}\Big(\p(0)-\frac{s}{\chi}\Big)<\p\Big(\sqrt{\frac{1}{v_--\b}\int_{\b}^{v_-}Q(V)dV}\,\Big),
  \end{split}
\end{equation*}
which implies that $-\sqrt{\frac{1}{v_+}\int_0^{v_+}J(V)dV}<\p^{-1}(B(\b))<0$ and \\
$0<\p^{-1}(B(0))<\sqrt{\frac{1}{v_--\b}\int_{\b}^{v_-}Q(V)dV}$. Hence
$$
  \frac{1}{v_+}\int_0^{v_+}J(V)dV>\p^{-1}(B(\b))^2\quad\text{and}\quad\frac{1}{v_--\b}\int_{\b}^{v_-}Q(V)dV>\p^{-1}(B(0))^2.
$$
By Lemma \ref{lem2.9}, $v_*\in(0,v_+),\ v^*\in(\b,v_-)$ such that $J(v_*)>\p^{-1}(B(\b))^2,\ Q(v^*)>\p^{-1}(B(0))^2$. Therefore, $-\sqrt{J(v_*)}<\p^{-1}(B(\b))<0$ and $0<\p^{-1}(B(0))<\sqrt{Q(v^*)}$.
\end{proof}

\begin{prps}
\label{prop2.11}
If $\chi\p(0)<s<s_2$ and $v^*\in(\b,v_-)$ as in Proposition \ref{prop2.10}. If we choose $w^*$ such that $\p^{-1}(B(0))<w^*<\sqrt{Q(v^*)}$, then for equilibrium $E_-=(v_-,0)^T$ of the system \eqref{redode}, we have
$$
  \l_2(E_-)<-\frac{w^*}{v_--v^*}<0.
$$
\end{prps}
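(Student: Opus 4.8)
The plan is to establish the two inequalities separately. The right-hand bound $-\frac{w^*}{v_--v^*}<0$ is immediate: by Proposition~\ref{prop2.10} we have $w^*>\p^{-1}(B(0))>0$, and $v^*\in(\b,v_-)$ forces $v_--v^*>0$, so the quotient is positive and its negative is negative. The substance of the proof lies in the left-hand bound. First I would recall from Proposition~\ref{prop2.4} that at the saddle $E_-=(v_-,0)^T$ (here $v_->\b$, so $g'(v_-)>0$) the smaller eigenvalue is
$$
  \l_2(E_-)=\frac{-h'(0)-\sqrt{h'(0)^2+4g'(v_-)}}{2}.
$$
Hence the target estimate $\l_2(E_-)<-\frac{w^*}{v_--v^*}$ is, after negating, equivalent to
$$
  \frac{w^*}{v_--v^*}<\frac{h'(0)+\sqrt{h'(0)^2+4g'(v_-)}}{2}.
$$

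Next I would bound the left side of this equivalent inequality using the specific choice of $w^*$. Since $w^*<\sqrt{Q(v^*)}$ and, by the definition of $Q$, we have $Q(v^*)=(v_--v^*)^2\big(\inf_{z\in(v^*,v_-]}g'(z)\big)$ with $v_--v^*>0$ (and the infimum positive because Proposition~\ref{prop2.10} guarantees $Q(v^*)>\p^{-1}(B(0))^2>0$), dividing gives
$$
  \frac{w^*}{v_--v^*}<\sqrt{\inf_{z\in(v^*,v_-]}g'(z)}.
$$
The key observation is that the endpoint $v_-$ belongs to the interval $(v^*,v_-]$, so the infimum is at most $g'(v_-)$; therefore $\frac{w^*}{v_--v^*}<\sqrt{g'(v_-)}$.

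It then remains to check the purely algebraic inequality $\sqrt{g'(v_-)}\le\frac{h'(0)+\sqrt{h'(0)^2+4g'(v_-)}}{2}$. Writing $a:=h'(0)>0$ (positive by Lemma~\ref{lem2.3}) and $b:=g'(v_-)>0$, this reads $2\sqrt{b}-a\le\sqrt{a^2+4b}$; if the left side is nonpositive it holds trivially, and otherwise squaring reduces it to $-4a\sqrt{b}\le0$, which is clear. Chaining the strict bound $\frac{w^*}{v_--v^*}<\sqrt{g'(v_-)}$ with this non-strict algebraic bound produces the strict inequality $\frac{w^*}{v_--v^*}<-\l_2(E_-)$, which is what we want. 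I do not anticipate a serious obstacle: the only mildly delicate point is extracting from the definition of $Q$ that $\inf_{z\in(v^*,v_-]}g'(z)\le g'(v_-)$, after which the remaining estimate is elementary.
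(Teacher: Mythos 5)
Your proof is correct and follows essentially the same route as the paper's: both reduce the claim to the chain $\frac{w^*}{v_--v^*}<\sqrt{\inf_{z\in(v^*,v_-]}g'(z)}\le\sqrt{g'(v_-)}\le -\l_2(E_-)$, the paper merely phrasing it in terms of squares of these quantities. If anything, your formulation is slightly cleaner, since the paper's intermediate bound $\l_2(E_-)^2>h'(0)+g'(v_-)$ need not hold in general (only $\l_2(E_-)^2> g'(v_-)$ is needed, and that is what both arguments actually use).
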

\begin{proof}
Recall that the equilibrium $E_-=(v_-,0)^T$ is a saddle for \eqref{redode}. The negative eigenvalues of $A(E_-)$ is $\l_2(E_-)=\frac{-h'(0)-\sqrt{h'(0)^2+4g'(v_-)}}{2}$. By the choice of $w^*$, we have that
$$
  {w^*}^2<Q(v^*)=(v_--v^*)^2\Big(\inf_{z\in(v^*,v_-]}g'(z)\Big).
$$
Since $g'(v_-)>0$ and $h'>0$, we have that
\begin{align*}
  \l_2(E_-)^2&=\frac{1}{2}h'(0)^2+g'(v_-)+\frac{1}{2}h'(0)\sqrt{h'(0)^2+4g'(v_-)} \\
             &>h'(0)+g'(v_-)>g'(v_-)\ge\inf_{V\in(v^*,v_-]}g'(V).
\end{align*}
That is, $\l_2(E_-)^2>\inf\limits_{V\in(v^*,v_-]}g'(V)>\frac{{w^*}^2}{(v_--v^*)^2}$, and hence $\l_2(E_-)<-\frac{w^*}{v_--v^*}<0$.
\end{proof}

We have the following theorem:

\begin{thrm}
\label{thm2.12} Assume that conditions
${\rm(}A_1{\rm)}\sim{\rm(}A_2{\rm)}$ hold. Let $u_-$ satisfy
$g(\b)<u_-<g^*$, where $g^*=\min\{g(0),g_{\infty}\}$ and
$u_-=g(v_{\pm})$. If the traveling speed $s$ satisfies
$\chi\p(0)<s<s_*$, then there exists a orbit $\L_0$ of system
\eqref{redode} from $E_-=(v_-,0)^T$ to $E_+=(v_+,0)^T$. The singular
orbit $\L_0$ admits a traveling wave profile $\L_{\e}$ for system \eqref{KS}.
\end{thrm}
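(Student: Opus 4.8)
The plan is to reduce the theorem to a purely planar statement and then produce the required heteroclinic orbit of the reduced system \eqref{redode} as the forward evolution of the unstable manifold of the saddle $E_-$. By the discussion following \eqref{treigen}, the critical manifold $\mathcal{M}_0$ is normally hyperbolic and globally attracting for the fast flow \eqref{limfast}, so it suffices to exhibit an orbit $\L_0$ of \eqref{redode} joining $E_-=(v_-,0)^T$ to $E_+=(v_+,0)^T$; geometric singular perturbation theory then automatically lifts $\L_0$ to a true profile $\L_\e$ with $\L_\e\to\L_0$ as $\e\to0$, and since $u_-=u_+$ this profile is a pulse. Under the hypotheses $g(\b)<u_-<g^*$, Proposition \ref{prop2.6} guarantees exactly the two equilibria $E_\pm$ with $0<v_+<\b<v_-$, where $E_-$ is a saddle and $E_+$ is a stable node or focus (Proposition \ref{prop2.4}).

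First I would single out the branch of the unstable manifold of $E_-$ tangent to $\tv_1(E_-)=(1,\lambda_1(E_-))^T$ that enters the quadrant $\{V<v_-,\,W<0\}$, i.e.\ the branch along which $V$ decreases toward $v_+$; call its forward orbit $\gamma$. The main work is to confine $\gamma$ inside a bounded, positively invariant trapping region $\mathcal{R}\subset\RR_+\times\RR$ having $E_-$ as a boundary corner and $E_+$ in its interior. I would build $\partial\mathcal{R}$ from a few explicit pieces and verify that $F(V,W)=(W,-h(W)+g(V))^T$ points strictly inward on each: on horizontal and vertical segments the inward condition reduces to the sign of $V'=W$ and, through Proposition \ref{prop2.8}, to whether a point lies above or below the isocline $W=\p^{-1}(B(V))$; the admissible depth in $W$ is controlled by the constants $\sqrt{J(v_*)}$ and $\sqrt{Q(v^*)}$ from Proposition \ref{prop2.10}, which by construction dominate $|\p^{-1}(B(\b))|$ and $\p^{-1}(B(0))$, the extreme values of the isocline. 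The delicate piece is the segment issuing from the corner $E_-$: here Proposition \ref{prop2.11} is used so that this segment has slope strictly between the stable direction $\lambda_2(E_-)$ and the geometric slope $-w^*/(v_--v^*)$, which simultaneously forces $\gamma$ into $\mathcal{R}$ and prevents any interior orbit from escaping across the stable manifold of the saddle. Throughout, the monotone energy-type identity $\frac{d}{dV}\big(\tfrac12 W^2\big)=g(V)-h(W)$, together with $h'>0$ (Lemma \ref{lem2.3}) and Lemma \ref{lem2.9}, supplies the quantitative bounds that close these boundary inequalities.

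Once $\gamma$ is trapped in the compact set $\mathcal{R}$ for all forward $\xi$, I would invoke the Poincar\'e--Bendixson theorem: $\omega(\gamma)$ is nonempty, compact, connected and invariant. Proposition \ref{prop2.7} (Bendixson's criterion, using $\nabla\cdot F=-h'(W)<0$) rules out any periodic orbit, and the same sign-definite divergence rules out a homoclinic loop at $E_-$; hence $\omega(\gamma)$ is a single equilibrium. Since $\gamma$ lies on the unstable manifold of $E_-$ and not on its stable manifold, it cannot return to $E_-$, so inside $\mathcal{R}$ the only possibility is $\omega(\gamma)=\{E_+\}$. Thus $\gamma$ is the desired connection $\L_0$ from $E_-$ to $E_+$, and the GSP lift described above produces $\L_\e$, completing the proof.

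I expect the main obstacle to be the trapping-region construction of the second step, and in particular reconciling all of the constants $v_*,v^*,w^*,\sqrt{J(v_*)},\sqrt{Q(v^*)}$ into a single closed region on whose entire boundary $F$ is inward-pointing while $E_-$ sits exactly at a corner. The behavior near the saddle corner is the subtlest point, since there one must thread the boundary between the incoming and outgoing directions of $E_-$; this is precisely what the inequality $\lambda_2(E_-)<-w^*/(v_--v^*)$ from Proposition \ref{prop2.11} is engineered to guarantee.
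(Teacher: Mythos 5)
Your proposal follows essentially the same route as the paper: the same trapping region built from the constants $v_*,v^*,w_*,w^*$ of Proposition \ref{prop2.10}, with Proposition \ref{prop2.11} controlling the edge issuing from the saddle corner $E_-$, followed by Poincar\'e--Bendixson together with the Bendixson criterion of Proposition \ref{prop2.7}, and finally the GSP lift to $\L_\e$. Your treatment of the last step is in fact slightly more explicit than the paper's (identifying $\L_0$ as the unstable branch of $E_-$ and ruling out homoclinic graphics via the sign-definite divergence), but this is a refinement of the same argument rather than a different one.
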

\begin{proof}
We prove the theorem by constructing a suitable trapping region $\Omega$ containing $E_{\pm}$, and is enclosed by curves $\{\G_i\}_{i=1}^6$. According to Proposition \ref{prop2.10}, we can choose $v_*,v^*,w_*,w^*$ such that $v_*\in (0,v_+),\ v^*\in(\b,v_-)$, and $-\sqrt{J(v_*)}<w_*<\p^{-1}(B(\b))<0<\p^{-1}(B(0))<w^*<\sqrt{Q(v^*)}$. Let $\tn_i$ be the outward normal vector of $\G_i,\ i=1,\cdots,6$, where curves $\{\G_i\}$ are defined as follows
\begin{itemize}
  \item[(1)] $\G_1:=\big\{(V,W)\big| W=\frac{w_*}{v_*}V,\ 0\le V\le v_*\big\}$, with $\tn_1=\big(\frac{w_*}{v_*},-1\big)^T$.
  \item[(2)] $\G_2:=\{(V,W)\big| W=w_*,\ v_*\le V\le v_-\}$ with $\tn_2=(0,-1)^T$.
  \item[(3)] $\G_3:=\{(V,W)\big| V=v_-,\ w_*\le W<0\}$ with $\tn_3=(1,0)^T$.
  \item[(4)] $\G_4:=\big\{(V,W)\big| W=-\frac{w^*}{v_--v^*}(V-v_-),\ v^*\le V<v_-\big\}$, with $\tn_4=\big(\frac{w^*}{v_--v^*},1\big)^T$.
  \item[(5)] $\G_5:=\{(V,W)\big| W=w^*,\ 0\le V\le\b\}$ with $\tn_5=(0,1)^T$.
  \item[(6)] $\G_6:=\{(V,W)\big| V=0,\ 0<W<w^*\}$ with $\tn_6=(-1,0)^T$.
\end{itemize}
\begin{fig}
\label{fig3}
\includegraphics[scale=0.56]{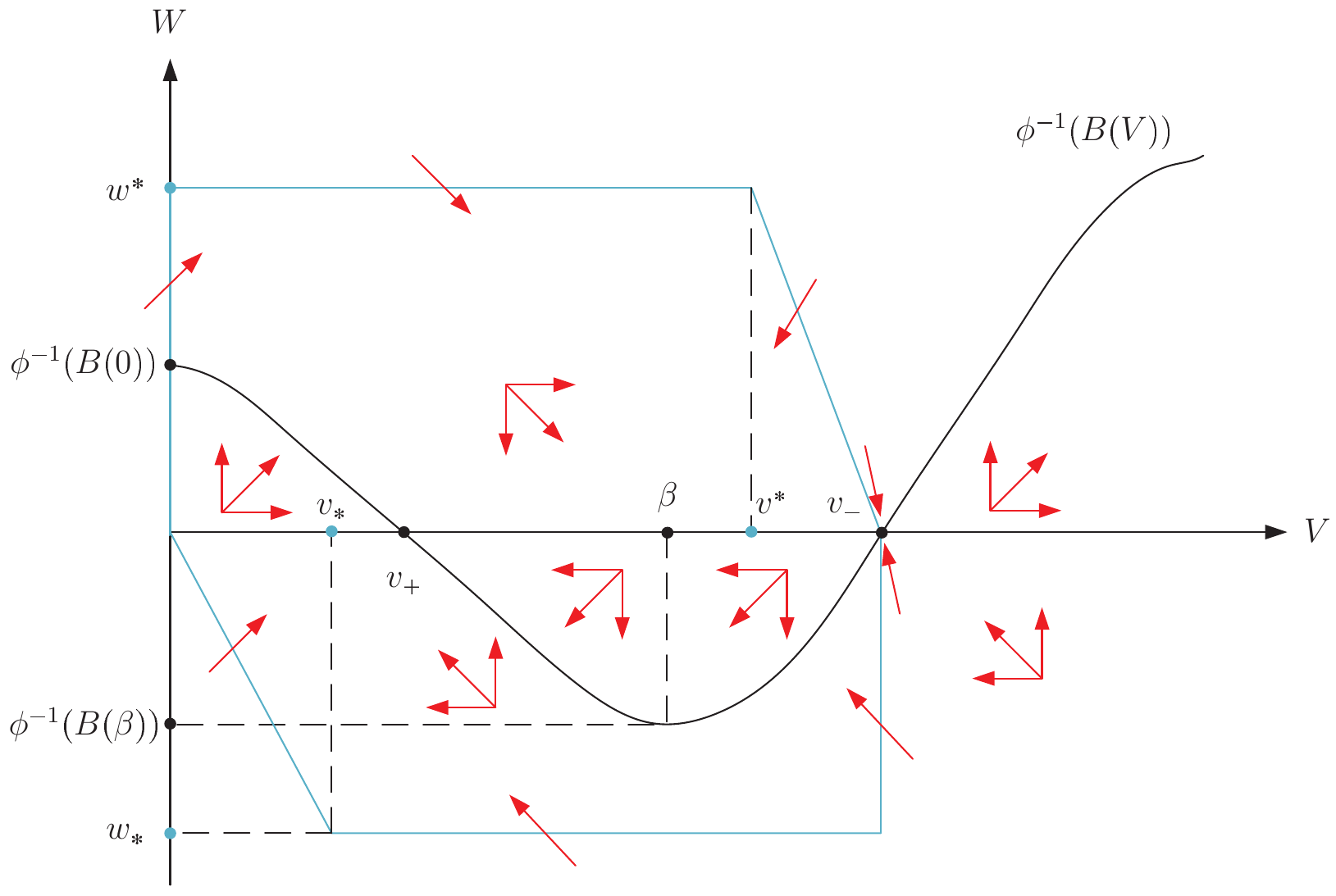}\\
Figure 3. The trapping region $\Omega$ for the system \eqref{redode}.\medskip
\end{fig}
To show the existence of the orbit, it is sufficient to show that the closed region $\Omega$ is a positively invariant region, which means that $\tn_i\cdot F\big|_{\G_i}\le 0$ for $i=1,\cdots,6$.
\begin{itemize}
\item[(1)] On $\G_1$: Since $h'>0$ and $W<0$ on $\G_1$, so $h(W)<h(0)=u_-$. Also, observe that
$$
  \frac{d}{dV}\Big(\frac{w_*^2}{v_*^2}V-g(V)\Big)=\frac{w_*^2}{v_*^2}-g'(V)>0.
$$
Then by choice of $w_*$, we have that
\begin{equation*}
  \begin{split}
    \tn_1\cdot F\big|_{\G_1} &=\frac{w_*^2}{v_*^2}V-g(V)+h(W) \\
                             &<\frac{w_*^2}{v_*^2}V-g(V)+u_-<\frac{w_*^2}{v_*}-g(v_*)+u_- \\
                             &<\frac{J(v_*)}{v_*}-g(v_*)+u_-=g(v_*)-u_--g(v_*)+u_-=0.
  \end{split}
\end{equation*}
\item[(2)] On $\G_2$: Since $h'>,\ w_*<\p^{-1}(B(\b))$, and by assumption ($A_2$) for $g$, we have that
$$
  \tn_2\cdot F\big|_{\G_2}=h(w_*)-g(V)<h(\p^{-1}(B(\b)))-g(V)=g(\b)-g(V)\le 0.
$$
\item[(3)] On $\G_3$: Since $w_*\le W<0$, it is clear that
$$
  \tn_3\cdot F\big|_{\G_3}=W<0.
$$
\item[(4)] On $\G_4$: We use Proposition \ref{prop2.11} to show that $\G_4$ excludes the stable subspace of $E_-$ from the region $\Omega$ so that all flows will go into the region $\Omega$. Since $h'>0$ and $W>0$ on $\G_4$, we have $h(W)>h(0)=u_-$. Also, by the choice of $w^*$, it gives
$$
  \frac{d}{dV}\Big(-\frac{{w^*}^2}{(v_--v^*)^2}(V-v_-)+g(V)\Big)=-\frac{{w^*}^2}{(v_--v^*)^2}+g'(V)>g'(V)-\Big(\inf_{V\in(v^*,v_-]}g'(V)\Big)\ge 0.
$$
Hence
\begin{equation*}
  \begin{split}
    \tn_4\cdot F\big|_{\G_4} &=-\frac{{w^*}^2}{(v_--v^*)^2}(V-v_-)+g(V)-h(W) \\
                             &<-\frac{{w^*}^2}{(v_--v^*)^2}(V-v_-)+g(V)-u_- \\
                             &\le -\frac{{w^*}^2}{(v_--v^*)^2}(v_--v_-)+g(v_-)-u_-=0.
  \end{split}
\end{equation*}
\item[(5)] On $\G_5$: Since $h'>0,\ w^*>\p^{-1}(B(0))$, and by assumption ($A_2$) of $g$, we have that
$$
  \tn_5\cdot F\big|_{\G_5}=-h(w^*)+g(V)<-h(\p^{-1}(B(0)))+g(V)=-g(0)+g(V)\le 0.
$$
\item[(6)] On $\G_6$: Since $0<W<w^*$, it is clear that
$$
  \tn_6\cdot F\big|_{\G_6}=-W<0.
$$
\end{itemize}
According to Pioncar\'{e}-Bendixson Theorem, for any fixed $u_-$
with $g(\b)<u_-<g^*,\ u_-=g(v_{\pm})$, there exists an orbit $\L_0$
of the system \eqref{redode} from $E_-=(v_-,0)^T$ to
$E_+=(v_+,0)^T$. The geometric singular perturbation theory implies
that, for $\e>0$ sufficiently small, there exists an invariant region $\Omega_{\e}$ close
to $\Omega$ such that the flow on $\Omega_{\e}$ is a regular
perturbation of the one on $\Omega$. From which we can conclude that,
there exists a heteroclinic orbit $\L_{\e}$ of the system
\eqref{slow} in a neighborhood of $\L_0$, in particular,
$\L_{\e}\to\L_0$ as $\e\to 0$. Such an orbit $\L_{\e}$ is the
traveling wave profile of $\L_0$, and hence is the traveling pulse
solution for the system \eqref{KS}.
\end{proof}

The existence of the traveling pulse for the symmetry case is given as follows.

\begin{cor}
\label{cor2.13} Suppose that conditions
${\rm(}A_1{\rm)}\sim{\rm(}A_2{\rm)}$ hold. Also assume that $u_-$
satisfies $g(\b)<u_-<g^*$ where $g^*=\min\{g(0),g_{\infty}\}$ and
$u_-=g(v_{\pm})$. If the traveling speed $s$ satisfies
$s^*<s<\chi\p(0)$ where $s^*:=\max\{s_1,s_2\}$ and
\begin{eqnarray*}
    &&s_1:=\chi\Big(1-\frac{u_-}{g(\b)}\Big)^{-1}\Big[\p\Big(\sqrt{\frac{1}{v_+}\int_0^{v_+}J(V)dV}\,\Big)-\frac{u_-\p(0)}{g(\b)}\Big], \\
    &&s_2:=\chi\Big(1-\frac{u_-}{g(0)}\Big)^{-1}\Big[\p\Big(-\sqrt{\frac{1}{v_--\b}\int_{\b}^{v_-}Q(V)dV}\,\Big)-\frac{u_-\p(0)}{g(0)}\Big],
\end{eqnarray*}
then there exists a orbit $\L_0$ of system \eqref{redode} connecting from $E_-=(v_-,0)^T$ to $E_+=(v_+,0)^T$. The singular orbit $\L_0$ admits a traveling wave profile $\L_{\e}$ of the system \eqref{KS}.
\end{cor}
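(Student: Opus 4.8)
The plan is to run the argument of Theorem \ref{thm2.12} essentially verbatim, tracking the single structural change caused by the reversed inequality $s<\chi\p(0)$, i.e. $\chi\p(0)-s>0$. First I would record the consequences of this sign flip for the objects built in Sections 2.2--2.3. In the analog of Lemma \ref{lem2.3} the same computation now gives $h'(W)<0$, so the $h$ of \eqref{utow} is strictly \emph{decreasing}; accordingly the transversal eigenvalue \eqref{treigen} becomes $\l=\frac{u_-}{U}(\chi\p(0)-s)>0$, so $\mathcal{M}_0$ is still normally hyperbolic but now repelling for the limiting fast flow \eqref{limfast}. In the analog of Proposition \ref{prop2.4} the matrix $A(E)$ has trace $-h'(0)>0$, so $E_-=(v_-,0)^T$ (with $v_->\b$) remains a saddle while $E_+=(v_+,0)^T$ (with $v_+<\b$) becomes an unstable node or focus. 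Finally, since $\p(0)-\frac{s}{\chi}>0$ now, the analog of Proposition \ref{prop2.8} yields $\sgn(B'(V))=-\sgn(g'(V))$ together with $\p^{-1}(B(\b))>0$ and $\p^{-1}(B(0))<0$; these are exactly the reflected signs encoded by the flipped square-root arguments of $s_1,s_2$ in the statement.

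Second, I would reproduce the trapping-region construction in reflected form. Using the analogs of Propositions \ref{prop2.10} and \ref{prop2.11} (whose proofs are identical once the signs of $\p^{-1}(B(\b))$ and $\p^{-1}(B(0))$ are swapped), the hypothesis $s^*<s<\chi\p(0)$ with $s^*=\max\{s_1,s_2\}$ lets me choose $w_*,w^*$ obeying $-\sqrt{Q(v^*)}<w_*<\p^{-1}(B(0))<0<\p^{-1}(B(\b))<w^*<\sqrt{J(v_*)}$, and I would define the six curves $\{\G_i\}_{i=1}^6$ as the mirror images of those in the proof of Theorem \ref{thm2.12}. The key point is that the $W'$-component of the field $F$ has reversed sign (below the isocline $W'<0$ and above it $W'>0$, since $h$ is now decreasing), so on each $\G_i$ one checks $\tn_i\cdot F\ge 0$; that is, $\Omega$ is \emph{negatively} invariant, with the flow pointing outward everywhere. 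Bendixson's criterion still applies verbatim, since $\nabla\cdot F=-h'(W)$ now has constant positive sign, so \eqref{redode} has no closed orbit in $\RR_+\times\RR$ (the analog of Proposition \ref{prop2.7}).

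Third, I would extract the connecting orbit by applying the Poincar\'e--Bendixson theorem to the time-reversed flow on $\Omega$: for $\t:=-\xi$ the region $\Omega$ is positively invariant, $E_+$ is a sink, $E_-$ is a saddle, and no closed orbits exist, so the branch of the stable manifold of $E_-$ entering $\Omega$ has $\a$-limit set equal to $E_+$, producing the heteroclinic orbit $\L_0$ of \eqref{redode} joining $E_-$ and $E_+$. Since $\mathcal{M}_0$ is normally hyperbolic (now repelling), Fenichel's theorem still guarantees a slow manifold close to $\mathcal{M}_0$, so $\L_0$ is shadowed by a true orbit $\L_\e$ of \eqref{slow} with $\L_\e\to\L_0$ as $\e\to0$, which is the desired traveling pulse of \eqref{KS}. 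The main obstacle, and the only genuine difference from Theorem \ref{thm2.12}, is orientational: because $\mathcal{M}_0$ is repelling and $E_+$ is a source, no bounded positively invariant region can exist (the divergence $-h'(W)$ is positive), so the trapping region must be built for the reversed flow with all flux inequalities reversed; the accompanying bookkeeping --- checking that the reflected choices of $w_*,w^*$ are consistent with the flipped $s_1,s_2$, and that Fenichel persistence is unaffected by the switch from an attracting to a repelling critical manifold --- is the part that needs care.
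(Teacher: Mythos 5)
The paper states this corollary without proof, presenting it as the ``symmetry case'' of Theorem \ref{thm2.12}, and your proposal is a correct, essentially faithful realization of that intended argument: the sign flips you track ($h'<0$, repelling $\mathcal{M}_0$, $E_+$ becoming a source, $\sgn(B')=-\sgn(g')$ with $\p^{-1}(B(0))<0<\p^{-1}(B(\b))$, and the reflected choice of $w_*,w^*$ extracted from the new $s_1,s_2$) all check out, and the negatively invariant mirrored region together with Poincar\'e--Bendixson for the time-reversed flow is the right mechanism. The one caveat is a defect of the statement that your construction exposes rather than creates: since $E_+$ is now a source, the connecting orbit necessarily runs from $E_+$ to $E_-$ in forward $\xi$ (it lies on the stable manifold of the saddle $E_-$), so the corollary's literal phrase ``from $E_-=(v_-,0)^T$ to $E_+=(v_+,0)^T$'' must be read as ``joining'' the two equilibria, with the roles of $\xi\to\pm\infty$ interchanged relative to Theorem \ref{thm2.12}.
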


\begin{rem}
\label{rem2.14}
If the condition ${\rm(}A_1{\rm)}$ is replaced by $\p'<0$, then we have similar result for the existence of the traveling pulse solution of the system \eqref{KS} as in Theorem \ref{thm2.12} and Corollary \ref{cor2.13}.
\end{rem}



\section{Instability of the traveling pulse solution}

In this section we consider the instability of the traveling pulse solution obtained in the previous section. We first consider the stability of $(U_{\e},V_{\e})^T(\xi)$ in the space $X=BUC(\RR;\RR^2)$ as mentioned in \eqref{X}. For convenience of the notation, we write $(U_{\e},V_{\e})^T$ as $(U,V)^T$ in the following.

Let $(U,V)^T(\xi)$ with $\xi =x-st$ be the traveling pulses of \eqref{KS}. Then $\left( U,V\right)^T$ satisfies
\begin{equation*}
\label{TWeq'n}
  \left\{\begin{array}{l}
    \epsilon U^{\prime \prime }+sU^{\prime }-\chi \left( U\phi \left( V^{\prime}\right) \right) ^{\prime }=0,\smallskip \\
    V^{\prime \prime }+s\epsilon V^{\prime }+U-g\left( V\right) =0.%
  \end{array}\right.
\end{equation*}
Suppose $(u,v)^T(x,t)$ is a perturbation of the traveling pulse solution in moving coordinate $\xi=x-st$:
$$
 (u,v)^T(x,t)=(U(\xi)+p(\xi,t),V(\xi)+q(\xi,t))^T.
$$
The linearized equation of \eqref{KS} around $(U,V)^T$ is
\begin{eqnarray*}
    &&\left\{\begin{array}{l}
        p_t=\e p_{\xi\xi}-\chi U\p'(V')q_{\xi\xi}+(s-\chi\p(V'))p_{\xi}-\chi(U\p'(V'))'q_{\xi}-\chi(\p(V'))'p, \\
        \e q_t=q_{\xi\xi}+s\e q_{\xi}+p-g'(V)q
       \end{array}\right. \\
    && \\
    &\Rightarrow&\left\{\begin{array}{l}
        p_t=\e p_{\xi\xi}-\chi U\p'(V')q_{\xi\xi}+(s-\chi\p(V'))p_{\xi}-\chi(U\p'(V'))'q_{\xi}-\chi(\p(V'))'p, \\
        q_t=\frac{1}{\e}q_{\xi\xi}+sq_{\xi}+\frac{1}{\e}p-\frac{1}{\e}g'(V)q
       \end{array}\right.
\end{eqnarray*}
The linearized operator $\mathcal{L}:\dom(\mathcal{L})\to X$ of (\ref{KS}) around $\left( U,V\right) $ is%
\begin{equation}
\label{L}
  \mathcal{L}:=D(\xi)\partial_{\xi\xi}+M(\xi)\partial_{\xi}+N(\xi)I,
\end{equation}
where
\begin{eqnarray*}
  D(\xi)&=&\left[\begin{array}{cc}
    \e & -\chi U(\xi)\p'(V'(\xi)) \\
    0  & \frac{1}{\e}
  \end{array}\right], \\
  M(\xi)&=&\left[\begin{array}{cc}
    s-\chi\p(V'(\xi)) & -\chi(U(\xi)\p'(V'(\xi)))' \\
    0                 & s
  \end{array}\right], \\
  N(\xi)&=&\left[\begin{array}{cc}
    -\chi(\p(V'(\xi)))' & 0 \\
    \frac{1}{\e}             & -\frac{1}{\e}g'(V(\xi))
  \end{array}\right],
\end{eqnarray*}
and $I$ denote the identity operator on $X$. Then the linearized equation of \eqref{KS} around $(U,V)^T$ can be written formally as a differential equation in $X$ as
$$
  \frac{d\tp}{dt}=\mathcal{L}\tp,\quad \tp=(p,q)^T\in X.
$$
We consider the spectral problem of the linearized operator $\mathcal{L}:$
$$
  \mathcal{L}\tp=\l\tp,\quad \tp=(p,q)^T\in X
$$
In the following we recall the classification of the spectrum of linear operator first.

\begin{definition}
\label{def3.1}
Let us denote $\sigma \left( \mathcal{L}\right) $ be the spectrum of $\mathcal{L}.$ Also denote $\sigma _{n}\left( L\right) \subset \sigma \left( \mathcal{L}\right) $ as the normal spectrum which consists of isolated eigenvalues with finite multiplicity. $\sigma _{e}\left(\mathcal{L}\right) :=\sigma \left( \mathcal{L}\right) \setminus\sigma _{n}\left( \mathcal{L}\right) $ is defined as the essential spectrum of $\mathcal{L}$.
\end{definition}

Before obtaining the spectral results of $\mathcal{L}$, in the following we prove that $\mathcal{L}$ is a generator of an analytic semigroup.

\begin{lemma}
\label{resolvent estimate}
The operator $\mathcal{L}$ defined in \eqref{L} has a dense domain. For $\re\lambda >0$ sufficiently large, the resolvent operator $(\lambda I-\mathcal{L})^{-1}$ exists and satisfying the following estimate:%
\begin{equation}
\label{L resolvent estimate}
  \left\|(\l I-\mathcal{L})^{-1}\right\|\le\frac{C}{|\l|},
\end{equation}
where $C>0$ is a constant.
\end{lemma}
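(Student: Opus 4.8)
The plan is to treat $\mathcal{L}$ as a uniformly elliptic second order system on $\RR$ with bounded, uniformly continuous coefficients, and to read off the resolvent bound \eqref{L resolvent estimate} from an estimate on the matrix-valued symbol of $\mathcal{L}$ in \eqref{L} --- first for coefficients frozen at a point, then globalized by a localization (freezing) argument. For the density of $\dom(\mathcal{L})$ I would take $\dom(\mathcal{L})=\{\tp\in X:\tp',\tp''\in X\}$, the maximal realization in $X=BUC(\RR;\RR^2)$ of \eqref{X}; since every $\tp\in X$ is the uniform limit of its mollifications $\tp\ast\r_\delta$, which are smooth with all derivatives bounded (because $\tp$ is bounded) and converge uniformly (because $\tp$ is uniformly continuous), these mollifications lie in $\dom(\mathcal{L})$ and the domain is dense.

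Next I would record that the coefficient matrices $D,M,N$ in \eqref{L} are bounded and uniformly continuous. This follows from the boundary conditions \eqref{bdcond}: along the pulse $U\to u_\pm$, $V\to v_\pm$, and $V'=W\to 0$, $V''\to 0$ as $\xi\to\pm\infty$, so $\p(V'),\p'(V'),U,(U\p'(V'))',(\p(V'))',g'(V)$ all extend continuously to $\xi=\pm\infty$; being continuous with limits at both ends, each entry is bounded and uniformly continuous on $\RR$. The crucial structural point is that the principal part $D(\xi)$ is upper triangular with diagonal entries $\e$ and $1/\e$, so its eigenvalues are $\e,1/\e$, uniformly bounded away from $0$ and $\infty$; hence $\mathcal{L}$ is uniformly (Petrowsky) elliptic and $D(\xi)$ is diagonalizable by a transformation of uniformly bounded condition number.

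The heart of the matter is the symbol estimate. Freezing the coefficients at a point $\xi_0$ and applying the Fourier transform, solving $(\l I-\mathcal{L}_{\xi_0})\tp=\tf$ amounts to inverting $P(k,\l):=\l I+D(\xi_0)k^2-iM(\xi_0)k-N(\xi_0)$ for $k\in\RR$. Writing $P(k,\l)=(\l I+Dk^2)\big(I+(\l I+Dk^2)^{-1}(-iMk-N)\big)$, I would first show $\|(\l I+Dk^2)^{-1}\|\le C/(|\l|+k^2)$ for $\re\l>0$: after diagonalizing $D$ the inverse is $\diag\big((\l+\e k^2)^{-1},(\l+\e^{-1}k^2)^{-1}\big)$ in the new basis, and $|\l+\e k^2|^2=(\re\l+\e k^2)^2+(\im\l)^2\ge|\l|^2+\e^2k^4\ge c(|\l|+k^2)^2$ (similarly for the other entry), the diagonalizing map having bounded condition number. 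Consequently the perturbation satisfies $\|(\l I+Dk^2)^{-1}(-iMk-N)\|\le C(|k|+1)/(|\l|+k^2)$, whose supremum over $k\in\RR$ is $O(|\l|^{-1/2})$; thus for $\re\l$ large it is $<1/2$ uniformly in $k$, the Neumann series converges, and $\|P(k,\l)^{-1}\|\le 2\|(\l I+Dk^2)^{-1}\|\le C/|\l|$. I would then pass from this symbol bound to the operator norm on $X$ by noting that $(\l I-\mathcal{L}_{\xi_0})^{-1}$ is convolution with the Green's function of the frozen constant-coefficient ODE system, whose $L^1(\RR)$-norm is $O(1/|\l|)$, so Young's inequality $\|K\ast\tf\|_\infty\le\|K\|_{L^1}\|\tf\|_\infty$ gives $\|(\l I-\mathcal{L}_{\xi_0})^{-1}\|\le C/|\l|$ with $C$ independent of $\xi_0$.

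Finally I would globalize to the variable-coefficient operator by the standard freezing method: choose a locally finite partition of unity $\{\theta_j\}$ subordinate to a covering of $\RR$ by intervals on which the oscillation of the coefficients is small (possible by uniform continuity), solve the frozen problems locally, and patch the local parametrices; the commutator terms $[\mathcal{L},\theta_j]$ are first order and, together with the coefficient oscillation, are absorbed for $\re\l$ large, yielding \eqref{L resolvent estimate}. I expect the main obstacle to be exactly this globalization on the unbounded line in the $BUC$ setting: one must control the patching and commutator errors uniformly in $\xi$ and show they are dominated by the gain from large $\re\l$, which is where the uniform continuity of $D,M,N$ and the uniform ellipticity of $D(\xi)$ are essential. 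Alternatively, once the frozen estimates and this perturbation structure are in place, the conclusion also follows from the abstract sectoriality results for uniformly elliptic systems with $BUC$ coefficients, in the spirit of the essential-spectrum machinery of \cite{Henry}.
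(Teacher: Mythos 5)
Your route is viable, but it is genuinely different from the paper's and considerably heavier. The paper never freezes coefficients and never localizes: it exploits the fact that $D(\xi)$ is upper triangular with \emph{constant} diagonal $(\e,1/\e)$, so the pure second-order part $\mathcal{L}_0=D(\xi)\partial_{\xi\xi}$ can be inverted exactly for every $\re\l>0$. The $q$-equation $\tfrac1\e q''-\l q=f_2$ is scalar and constant-coefficient, solved by Fourier transform with explicit kernel proportional to $e^{-\sqrt{\e\l}\,|\xi|}$; substituting $q''=\e(\l q+f_2)$ into the $p$-equation reduces it to another scalar constant-coefficient equation with known right-hand side, and $L^1$-bounds on the two explicit Green's functions give $\|(\mathcal{L}_0-\l I)^{-1}\|\le C_1/|\l|$ (Proposition A.2). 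The \emph{entire} first- and zeroth-order part $\mathcal{L}_1=M\partial_\xi+N$ is then absorbed globally as a relatively bounded perturbation via the interpolation inequality $\|\tp'\|\le\d\|\tp''\|+\tfrac2\d\|\tp\|$ and a Neumann series for $\bigl(I+\mathcal{L}_1(\mathcal{L}_0-\l I)^{-1}\bigr)^{-1}$ (Proposition A.3 and the concluding proof). Your freezing/partition-of-unity scheme is the standard sectoriality proof for uniformly elliptic systems with $BUC$ coefficients, so it establishes a more general fact and would survive a fully coupled, non-triangular diffusion matrix; but it carries two technical layers the paper avoids, one of which you understate: the passage from the symbol bound $\|P(k,\l)^{-1}\|\le C/(|\l|+k^2)$ to the $L^1$ bound $O(1/|\l|)$ on the frozen Green's function is not automatic (it needs decay of the kernel in $\xi$, i.e.\ a contour shift using that $\det P(\cdot,\l)$ is zero-free in a strip of width of order $\sqrt{|\l|}$, or an explicit computation of the kernel as the paper does), and the commutator/oscillation bookkeeping on the unbounded line must be carried out uniformly, which you have sketched but not closed. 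Your density argument by mollification is identical to the paper's Proposition A.1.
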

\begin{proof}
See Appendix A for the detailed proof.
\end{proof}

From Lemma \ref{resolvent estimate}, we have the results about semigroup:

\begin{theorem}
\label{thm3.3}
The operator $\mathcal{L}$ defined in \eqref{L} is a generator of an analytic semigroup on $X$.
\end{theorem}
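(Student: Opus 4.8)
The plan is to recognize that Theorem \ref{thm3.3} is an essentially immediate consequence of Lemma \ref{resolvent estimate} together with the standard characterization of generators of analytic semigroups by sectorial resolvent estimates (see, e.g., \cite{Henry}). That characterization states that a densely defined, closed operator $A$ generates an analytic semigroup provided there exist $\omega\in\RR$, $\delta\in(0,\pi/2)$ and $M>0$ such that the sector $\Sigma_{\omega,\delta}:=\{\lambda\in\CC:|\arg(\lambda-\omega)|<\tfrac{\pi}{2}+\delta\}\setminus\{\omega\}$ lies in the resolvent set $\rho(A)$ and $\|(\lambda I-A)^{-1}\|\le M/|\lambda-\omega|$ for all $\lambda\in\Sigma_{\omega,\delta}$. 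For $A=\mathcal{L}$, Lemma \ref{resolvent estimate} already supplies two of the three ingredients: the density of $\dom(\mathcal{L})$, and the resolvent bound $\|(\lambda I-\mathcal{L})^{-1}\|\le C/|\lambda|$ on a half-plane $\{\re\lambda>\omega\}$ for $\omega$ large. So the remaining work is to record closedness and to upgrade the half-plane bound to a sectorial one.

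First I would verify that $\mathcal{L}$ is closed. For a fixed $\lambda$ with $\re\lambda>\omega$, Lemma \ref{resolvent estimate} gives that $(\lambda I-\mathcal{L})^{-1}$ is a bounded operator defined on all of $X$; being injective with range $\dom(\mathcal{L})$, its inverse $\lambda I-\mathcal{L}$ is closed, and therefore $\mathcal{L}$ itself is closed. Together with density of $\dom(\mathcal{L})$, this makes $\mathcal{L}$ a densely defined closed operator.

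Next I would extend the resolvent into a sector by a Neumann-series perturbation argument. Fixing $\lambda_0$ with $\re\lambda_0>\omega$, one writes
\[
  \lambda I-\mathcal{L}=(\lambda_0 I-\mathcal{L})\bigl(I-(\lambda_0-\lambda)(\lambda_0 I-\mathcal{L})^{-1}\bigr),
\]
so that the Neumann series $\sum_{n\ge0}(\lambda_0-\lambda)^n(\lambda_0 I-\mathcal{L})^{-n}$ converges in operator norm and represents $(\lambda I-\mathcal{L})^{-1}$ whenever $|\lambda-\lambda_0|\,\|(\lambda_0 I-\mathcal{L})^{-1}\|<1$. By the bound in Lemma \ref{resolvent estimate}, this holds when $|\lambda-\lambda_0|<|\lambda_0-\omega|/C$ (up to a shift by $\omega$), i.e. the disk of convergence about $\lambda_0$ has radius proportional to the distance of $\lambda_0$ from the line $\re\lambda=\omega$. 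Letting $\lambda_0$ range over that line, the union of these disks contains a sector $\Sigma_{\omega,\delta}$ of some opening angle $\delta>0$, and on it the geometric-series estimate yields $\|(\lambda I-\mathcal{L})^{-1}\|\le M/|\lambda-\omega|$ for a suitable $M$, which is precisely the required sectorial bound.

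Applying the characterization theorem to $\mathcal{L}-\omega I$ (which then satisfies the bound on a sector about the origin) shows that $\mathcal{L}-\omega I$ generates an analytic semigroup; since $\mathcal{L}=(\mathcal{L}-\omega I)+\omega I$ differs from it by a scalar multiple of the identity, the semigroups differ only by the factor $e^{\omega t}$ and $\mathcal{L}$ generates an analytic semigroup as well. The substantive analytic difficulty, namely the resolvent estimate itself, is entirely contained in Lemma \ref{resolvent estimate} (proved in Appendix A); here the only genuine task is the sector-extension bookkeeping, which I expect to be the main—though routine—obstacle.
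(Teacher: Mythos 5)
Your proposal follows essentially the same route as the paper: both take the half-plane resolvent bound from Lemma \ref{resolvent estimate}, extend it to a sector via the Neumann-series disks centered on the vertical line $\re\lambda=R_0$ (the paper's disks of radius $|r|/(2C)$ about $R_0+ir$ and opening angle $\tan^{-1}(1/(2C))$ are exactly your geometric-series argument), and then invoke the standard sectorial-operator theorem from \cite{Henry} or \cite{LiWangWuYe}. Your explicit verification of closedness is a small addition the paper leaves implicit, but the substance is identical.
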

\begin{proof}
By Lemma \ref{resolvent estimate}, and the argument similar to the proof of Theorem 8.4.13 in \cite{LiWangWuYe},  one can prove that there exist $R_0\in\RR$ and a sector $S_{R_{0},\varphi }$ defined as
$$
  S_{R_0,\vp}:=\left\{\l\in\CC\Big|0\le|\arg(\l-R_0)|\le\vp,\ \vp=\frac{\pi}{2}+\tan^{-1}\Big(\frac{1}{2C}\Big),\ \l\neq R_0\right\},
$$
which is contained in the region
$$
  \left\{\l\in\CC\Big|\re\l\ge R_0,\ \l\neq R_0\right\}\cup\left(\bigcup_{r\in\RR,r\neq 0}\left\{\l\in\CC\Big||\l-(R_0+ir)|\le\frac{|r|}{2C}\right\}\right).
$$
Recall that $C$ is the constant in (\ref{L resolvent estimate}). Moreover, for all $\lambda \in S_{R_{0},\varphi },$ one can also prove that $\big\|(\l I-\mathcal{L})^{-1}\big\|\le\frac{C'}{|\l-R_0|}$ for some constant
$C'>0$. Therefore we have that $\mathcal{L}$ is a sectorial operator. Then by Theorem 8.4.18 in \cite{LiWangWuYe} or in Henry's book \cite{Henry} we have that $\mathcal{L}$ is a generator of an analytic semigroup $e^{t\mathcal{L}}$ which is defined as%
\begin{equation*}
  e^{t\mathcal{L}}:=\frac{1}{2\pi i}\int_{\Gamma }\left( \lambda I-\mathcal{L}\right) ^{-1}e^{\lambda t}d\lambda ,
\end{equation*}
where $\Gamma $ is a properly chose contour as described in \cite{Henry} or \cite{LiWangWuYe} and $i=\sqrt{-1}.$ The theorem is proved.
\end{proof}

\subsection{Instability on \boldmath$BUC(\RR;\RR^2)$ space}

In this subsection, we follow Theorem 3.3 to study the stability or instability of traveling pulse solutions by the spectral analysis, which is the extension of Theorems 1.1 and 3.1 in Chapter 5 of \cite{Volpert}. First, we establish the instability of the traveling pulse solutions of Section 2 in space $X$ which is defined in \eqref{X}.

\begin{theorem}
\label{stability with no weight}
The traveling pulse solution $(U,V)^T$ is unstable in $BUC(\RR;\RR^2)$.
\end{theorem}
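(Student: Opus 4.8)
The plan is to exhibit a point of the spectrum $\sigma(\mathcal{L})$ with strictly positive real part and then to convert this into instability by means of the analytic semigroup generated by $\mathcal{L}$ (Theorem \ref{thm3.3}). Since the constructed pulse satisfies $U(\xi)\to u_\pm$, $V'(\xi)\to 0$ and $V''(\xi)\to 0$ as $\xi\to\pm\infty$ (the orbit $\L_0$ approaches the equilibria $E_\pm$ of \eqref{redode} exponentially, $E_-$ being a saddle and $E_+$ a stable node or focus), the coefficient matrices in \eqref{L} converge to the constant matrices
$$
  D_\pm=\left[\begin{array}{cc} \e & -\chi u_\pm\p'(0) \\ 0 & \frac{1}{\e}\end{array}\right],\quad
  M_\pm=\left[\begin{array}{cc} s-\chi\p(0) & 0 \\ 0 & s\end{array}\right],\quad
  N_\pm=\left[\begin{array}{cc} 0 & 0 \\ \frac{1}{\e} & -\frac{1}{\e}g'(v_\pm)\end{array}\right].
$$
I would then introduce the limiting constant-coefficient operators $\mathcal{L}_\pm:=D_\pm\partial_{\xi\xi}+M_\pm\partial_\xi+N_\pm$.

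Next I would invoke the essential-spectrum theory (Theorem A.2 in Chapter 5 of \cite{Henry}), which identifies the boundary of $\sigma_e(\mathcal{L})$ with the two dispersion curves swept out, as the wave number $k$ ranges over $\RR$, by the roots $\l$ of
$$
  \det\!\big(\l I+k^2D_\pm-ikM_\pm-N_\pm\big)=0,
$$
and these curves themselves lie in $\sigma_e(\mathcal{L})$. It therefore suffices to produce a single real $k$ for which one of the roots has positive real part.

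The decisive computation is to evaluate the $+\infty$ branch at $k=0$, where the symbol collapses to $N_+$. Being lower-triangular, $N_+$ has eigenvalues $0$ and $-g'(v_+)/\e$. By Proposition \ref{prop2.6} the pulse connects $E_-=(v_-,0)^T$ to $E_+=(v_+,0)^T$ with $v_+<\b$, so condition $(A_2)$ forces $g'(v_+)<0$ and hence $\l_0:=-g'(v_+)/\e>0$. Thus $\l_0$ is a strictly positive real point of $\sigma_e(\mathcal{L})$; the off-diagonal entries of $D_+$ and $M_+$ are irrelevant here, since they enter only through the factors $k^2$ and $k$. Because $\mathcal{L}$ generates an analytic semigroup, the spectral mapping theorem gives $e^{\l_0 t}\in\sigma(e^{t\mathcal{L}})$, so the growth bound of $e^{t\mathcal{L}}$ is at least $\re\l_0>0$; combined with the instability criterion in the spirit of Theorems 1.1 and 3.1 in Chapter 5 of \cite{Volpert}, this shows that the zero solution of the linearized equation, and therefore the pulse $(U,V)^T$, is unstable in $X=BUC(\RR;\RR^2)$.

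The step I expect to be the main obstacle is the rigorous application of Henry's essential-spectrum theorem: one must check that the coefficients approach their limits at an exponential rate (which follows from the hyperbolicity of $E_\pm$ for the reduced flow \eqref{redode}, together with the normal hyperbolicity of $\mathcal{M}_0$), and one must confirm that the boundary dispersion curves, and not merely the region they enclose, belong to $\sigma_e(\mathcal{L})$, so that the single real point $\l_0$ is genuinely in the spectrum. Once these points are settled, the eigenvalue bookkeeping at $k=0$ and the semigroup argument are routine.
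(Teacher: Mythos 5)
Your proposal is correct and follows essentially the same route as the paper: both identify the essential spectrum via the dispersion curves $S_{\pm}$ from Theorem A.2 in Chapter 5 of \cite{Henry}, compute the limiting matrices $D_{\pm},M_{\pm},N_{\pm}$, and exploit $g'(v_+)<0$ (from $v_+<\b$ and $(A_2)$) to produce an element of $\s_e(\mathcal{L})$ with positive real part. The only cosmetic difference is that you evaluate the symbol exactly at $k=0$, where it reduces to $N_+$ with eigenvalue $-g'(v_+)/\e>0$, whereas the paper takes $\t$ small and argues from the sign of $-\re\,a(\t,\e)$ in the quadratic \eqref{lambda poly}; these are the same computation.
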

\begin{proof}
Define
\begin{equation*}
\label{dispersive curvess}
  S_{\pm}:=\{\l\in\CC\big|\det(-\t^2D_{\pm}+i\t M_{\pm}+N_{\pm}-\l I_2)=0\text{ for some }\t\in\RR\},
\end{equation*}
where $D_{\pm}=D(\pm\infty),\ M_{\pm}=M(\pm\infty),\ N_{\pm}=N(\pm\infty),\ i=\sqrt{-1}$ and $I_{2}$ denotes the $2\times 2$ identity matrix.
One can see from Theorem A.2 in Chapter 5 of \cite{Henry} that $S_{\pm }$
determine the boundaries of the essential spectrum $\s_e(\mathcal{L})$. By the properties of $\left(
U,V\right) \left( \xi \right) $ as $\xi \rightarrow \pm \infty ,$ we have
$$
  D_{\pm}=\left[\begin{array}{cc}
    \e & -\chi u_{\pm}\p'(0) \\
    0  & \frac{1}{\e}
  \end{array}\right],\quad
  M_{\pm}=\left[\begin{array}{cc}
    s-\chi\p(0) & 0 \\
    0           & s
  \end{array}\right],\quad
  N_{\pm}=\left[\begin{array}{cc}
    0            & 0 \\
    \frac{1}{\e} & -\frac{1}{\e}g'(v_{\pm})
  \end{array}\right].
$$
Then $S_+$ can be verified by solving $\det(-\t^2D_++i\t M_++N_+-\l I_2)=0$ for some $\t\in\RR$. That is, $S_+$ solves
\begin{eqnarray}
0 &=& \det(-\t^2D_++i\t M_++N_+-\l I_2) \nonumber\\
  &=& \det\left[\begin{array}{cc}
        -\t^2\e+i\t(s-\chi\p(0))-\l & \chi\t^2u_+\p'(0) \\
        \frac{1}{\e}                & -\frac{\t^2}{\e}+i\t s-\frac{1}{\e}g'(v_+)-\l
      \end{array}\right] \nonumber\\
  &=& \big(\l+\t^2\e-i\t(s-\chi\p(0))\big)\Big(\l+\frac{\t^2}{\e}-i\t s+\frac{1}{\e}g'(v_+)\Big)-\frac{\chi}{\e}\t^2u_+\p'(0). \label{S_+}
\end{eqnarray}
Equation \eqref{S_+} can be written as
\begin{equation}
\label{lambda poly}
  \l^2+a(\t,\e)\l+b(\t,\e)=0,
\end{equation}
where
\begin{equation*}
  \begin{split}
    a(\t,\e)&=\Big(\e+\frac{1}{\e}\Big)\t^2-i(2s-\chi\p(0))\t+\frac{1}{\e}g'(v_+), \\
    b(\t,\e)&=\big(\t^2\e-i\t(s-\chi\p(0))\big)\Big(\frac{\t^2}{\e}-i\t s+\frac{1}{\e}g'(v_+)\Big)-\frac{\chi}{\e}\t^2u_+\p'(0).
  \end{split}
\end{equation*}
Let $\l^+_+(\t,\e)$ and $\l^-_+(\t,\e)$ denote the solutions of (\ref{lambda poly}).  Then we have
$$
  2\l_+^{\pm}(\t,\e)=-a(\t,\e)\pm\sqrt{a^2(\t,\e)-4b(\t,\e)},
$$
which implies that
$$
  2\re\l_+^{\pm}(\t,\e)=-\Big(\t^2\e+\frac{\t^2}{\e}+\frac{1}{\e}g'(v_+)\Big)\pm\re\sqrt{a^2(\t,\e)-4b(\t,\e)}.
$$
Since $g'(v_+)<0$, then $-\big(\t^2\e+\frac{\t^2}{\e}+\frac{1}{\e}g'(v_+)\big)=-\frac{1}{\e}g'(v_+)+O(1)\t^2>0$ as $\t\to 0$, for $\t$ sufficiently small we have that, depending on the signs of $\re\sqrt{a^2(\t,\e)-4b(\t,\e)}$, either $\re\l_+^+(\t,\e)>0$ or $\re\l_+^-(\t,\e)>0$. Therefore, there exists some $\l_+\in\s_e(\mathcal{L})$ with $\re\l_+>0$ for sufficiently small $\e$ and $\t$. Hence $(U,V)^T$ is linearly unstable by the results of instability in \cite{Volpert}.
\end{proof}

\subsection{Instability on weighted \boldmath$BUC(\RR;\RR^2)$ space}

From Theorem \ref{stability with no weight} we that the function space $X$ in \eqref{X} is not a proper underlying space to distinguish the stable and unstable traveling pulses. In order to extract those traveling waves (or pulse) with faster decay rate as $\xi\to\infty ,$ we normally consider the weighted function spaces \eqref{Xw} with $w=e^{\rho \xi } $ for some $\rho>0$ to be determined later. The reason is based on the
fact that the traveling pulse solution $(U,V)^T$ tends to $(u_{\pm },v_{\pm })^T$ as $\xi \rightarrow \pm \infty $ exponentially by the standard theory of geometric singular perturbations.

Define the operator $T:X_w\rightarrow X$ as
\begin{equation}
\label{T}
  T\left[\begin{array}{c}
    p \\ q
  \end{array}\right]=e^{\rho \xi }\left[\begin{array}{c}
    p \\ q
  \end{array}\right].
\end{equation}
Then, we have the operator $\mathcal{L}_{w}:X\rightarrow X$ which is defined as
\begin{equation}
\label{L_w}
  \mathcal{L}_{w}=T\mathcal{L}T^{-1}.
\end{equation}

It is easy to show that $\s(\mathcal{L}_w)=\s(\mathcal{L})$. That is, the spectrum of $\mathcal{L}$ restricted in $X_{w}$ is equal to the spectrum of $\mathcal{L}_{w}$ in $X$. Therefore, when we study the spectral problems of operator $\mathcal{L}$ in $X_w$, we can always consider the eigenvalue problem $\mathcal{L}_w\tp=\l\tp$ with $\mathbf{p}\in X$ of $\mathcal{L}_{w}$ instead of $\mathcal{L}$. From \eqref{T} and \eqref{L_w} we can rewrite $\mathcal{L}_{w}$ as
$$
  \mathcal{L}_w:=D(\xi)\partial_{\xi\xi}+M_w(\xi)\partial_{\xi}+N_w(\xi)I,
$$
where $M_w(\xi)$ and $N_w(\xi)$ defined as
$$
  M_w=M-2\r D=\left[\begin{array}{cc}
    s-\chi\p(V')-2\r\e & -\chi(U\p'(V'))'+2\r\chi U\p'(V') \\
    0                  & s-\frac{2}{\e}\r
  \end{array}\right],
$$
and
$$
  N_w=\r^2 D-\r M+N=\left[\begin{array}{cc}
    \r^2\e-s\r+\r\chi\p(V')-\chi(\p(V'))' & -\r^2\chi U\p'(V')+\r\chi(U\p'(V'))' \\
    \frac{1}{\e}                          & \frac{1}{\e}\r^2-s\r-\frac{1}{\e}g'(V)
  \end{array}\right].
$$

without loss of generality, we assume that $s>\chi\p(0)$.

\begin{theorem}
\label{stability}
The traveling pulse solution $(U,V)$ is linearly unstable in $X_{w}$ for all $\rho >0$.
\end{theorem}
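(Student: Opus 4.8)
The plan is to mimic the strategy of Theorem \ref{stability with no weight}, but now applied to the conjugated operator $\mathcal{L}_w$. The key observation is that conjugation by the weight $w=e^{\rho\xi}$ shifts the symbols of the constant-coefficient operators at $\pm\infty$ by $\tau\mapsto\tau-i\rho$ (equivalently, it replaces $\partial_\xi$ by $\partial_\xi-\rho$), so the boundaries of the essential spectrum are now governed by the dispersion relations
\begin{equation*}
  S_{\pm}^w:=\{\l\in\CC\,|\,\det(-\t^2 D_\pm+i\t M_\pm^w+N_\pm^w-\l I_2)=0\text{ for some }\t\in\RR\},
\end{equation*}
where $M_\pm^w=M_w(\pm\infty)$ and $N_\pm^w=N_w(\pm\infty)$. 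Since $\mathcal{L}_w$ and $\mathcal{L}$ are conjugate, $\mathcal{L}_w$ is again sectorial and generates an analytic semigroup (Theorem \ref{thm3.3}), so the same instability criterion from \cite{Volpert} applies: it suffices to exhibit a point of $S_+^w$ (or $S_-^w$) with positive real part.

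First I would compute the limiting matrices. As $\xi\to+\infty$ we have $V'\to 0$, $U\to u_+$, and the spatial derivatives of $U$ and $V'$ vanish, so
\begin{equation*}
  M_+^w=\left[\begin{array}{cc} s-\chi\p(0)-2\r\e & 2\r\chi u_+\p'(0) \\ 0 & s-\frac{2}{\e}\r \end{array}\right],\quad
  N_+^w=\left[\begin{array}{cc} \r^2\e-s\r+\r\chi\p(0) & -\r^2\chi u_+\p'(0) \\ \frac{1}{\e} & \frac{1}{\e}\r^2-s\r-\frac{1}{\e}g'(v_+) \end{array}\right].
\end{equation*}
Expanding the determinant gives a quadratic $\l^2+a_w(\t,\e)\l+b_w(\t,\e)=0$ whose roots $\l_+^{\pm}$ satisfy $2\re\l_+^{\pm}=-\re a_w\pm\re\sqrt{a_w^2-4b_w}$. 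The crucial quantity is $\re a_w$, the sum of the two diagonal entries of $-\t^2 D_++i\t M_+^w+N_+^w$, namely
\begin{equation*}
  \re a_w=\Big(\e+\tfrac{1}{\e}\Big)\t^2-\e\r^2-\tfrac{1}{\e}\r^2+2s\r+\tfrac{1}{\e}g'(v_+).
\end{equation*}
I would evaluate at $\t=0$, obtaining $\re a_w\big|_{\t=0}=-\big(\e+\frac{1}{\e}\big)\r^2+2s\r+\frac{1}{\e}g'(v_+)$.

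The whole point is that introducing the weight \emph{cannot} remove the destabilizing eigenvalue near $\tau=0$, and this is where the main work lies. Exactly as in the unweighted case, for $\t$ small the sign of $\re\l_+^{\pm}$ is controlled by $-\re a_w$ together with $\re\sqrt{a_w^2-4b_w}$; so I must show $-\re a_w\big|_{\t=0}>0$, i.e.
\begin{equation*}
  \Big(\e+\tfrac{1}{\e}\Big)\r^2-2s\r-\tfrac{1}{\e}g'(v_+)>0,
\end{equation*}
for \emph{every} $\rho>0$. Viewed as a quadratic in $\rho$ with positive leading coefficient $\e+\frac1\e$, this fails only if the quadratic dips below zero, which requires a positive discriminant $4s^2+4(\e+\frac1\e)\frac1\e g'(v_+)>0$ \emph{and} a positive root. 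Here the key structural fact is $g'(v_+)<0$ (since $v_+<\b$ by Proposition \ref{prop2.6}), which I would exploit to control the sign. The main obstacle is therefore a careful case analysis: if the discriminant is nonpositive the inequality holds for all $\rho$ trivially, while if it is positive I must verify that the admissible weights do not fall inside the root interval, or else supply the positive-real-part eigenvalue from the $\re\sqrt{a_w^2-4b_w}$ term instead (as was done in Theorem \ref{stability with no weight}, where one of $\re\l_+^+$, $\re\l_+^-$ is forced positive regardless of the sign of $\re a_w$). In all cases the conclusion is that some $\l\in S_+^w\subset\s_e(\mathcal{L}_w)=\s_e(\mathcal{L}|_{X_w})$ has positive real part, so the pulse remains linearly unstable in $X_w$ for all $\rho>0$, completing the proof.
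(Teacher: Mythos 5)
Your route is the same as the paper's: pass to the conjugated operator $\mathcal{L}_w$, read off the essential--spectrum boundary from the dispersion relation at $+\infty$, reduce to the quadratic $\l^2+a_w\l+b_w=0$, and argue via the trace that $-\re\,a_w>0$ near $\t=0$ forces one root into the right half plane. (A small slip: the linear term of $-\re\,a_w|_{\t=0}$ in $\r$ is $-(2s-\chi\p(0))\r$, not $-2s\r$, because of the $\r\chi\p(0)$ entry you correctly recorded in $N_+^w$ but then dropped; this does not affect anything below.)

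The genuine gap is that you leave the decisive step as an unresolved case analysis. If the discriminant of $\mathcal{P}(\r):=\big(\e+\frac{1}{\e}\big)\r^2-(2s-\chi\p(0))\r-\frac{1}{\e}g'(v_+)$ were positive, there would be a nonempty interval of $\r>0$ on which $\mathcal{P}(\r)<0$, i.e.\ $-\re\,a_w<0$ near $\t=0$, and then the trace argument gives nothing; your proposed fallback of ``supplying the positive-real-part eigenvalue from the $\re\sqrt{a_w^2-4b_w}$ term instead'' is not justified and mischaracterizes the unweighted proof, where positivity of one of $\re\l_+^{\pm}$ came precisely from $\re\l_+^{+}+\re\l_+^{-}=-\re\,a>0$, not from the square-root term alone. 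So as written the proposal does not establish the claim ``for all $\r>0$.'' The paper closes exactly this point with one computation you stopped short of:
$$
\disc(\mathcal{P})=\big(2s-\chi\p(0)\big)^2+\frac{4}{\e}\Big(\e+\frac{1}{\e}\Big)g'(v_+)=\frac{4}{\e^2}\,g'(v_+)+o\Big(\frac{1}{\e^2}\Big)<0
$$
for $\e$ sufficiently small, since $g'(v_+)<0$ while $(2s-\chi\p(0))^2$ stays bounded; and small $\e$ is the only regime in which the pulses of Theorem \ref{thm2.12} exist. Hence the ``positive discriminant'' branch never occurs, $\mathcal{P}(\r)>0$ for every $\r>0$, and the rest of your argument goes through verbatim. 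Adding that one estimate turns your proposal into the paper's proof.
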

\begin{proof}
As in the proof of Theorem \ref{stability with no weight}, we can obtain $\s_e(\mathcal{L}_w)$ by finding $S_{w,\pm}$ with respect to $\mathcal{L}_w$,
where
$$
 S_{w,\pm}:=\{\l\in\CC\big|\det(-\t^2D_{\pm}+i\t M_{w,\pm}+N_{w,\pm}-\l I_2)=0\text{ for some }\t\in\RR\},
$$
where $M_{w,\pm}=M_w(\pm\infty),\ N_{w,\pm}=N_w(\pm\infty)$. For the convenience, we also consider $S_{w,+}$ consisting of $\l\in\CC$ that satisfy
\begin{eqnarray}
  0 &=& \det(-\t^2D_++i\t M_{w,+}+N_{w,+}-\l I_2) \label{S_w_+}\\
    &=& R_1(\l,\t,\e)R_2(\l,\t,\e)+R_3(\l,\t,\e), \nonumber
\end{eqnarray}
where
\begin{eqnarray*}
  R_1(\l,\t,\e) &=& \l+\t^2\e-i\t(s-\chi\p(0)-2\r\e)-\r^2\e+\r(s-\chi\p(0)), \\
  R_2(\l,\t,\e) &=& \l+\frac{1}{\e}\t^2-i\t\Big(s-\frac{2}{\e}\r\Big)-\frac{1}{\e}\r^2+s\r+\frac{1}{\e}g'(v_+), \\
  R_3(\l,\t,\e) &=& -\frac{1}{\e}\big((\t^2-\r^2)\chi u_+\p'(0)+2i\t\r\chi u_+\p'(0)\big).
\end{eqnarray*}
Then \eqref{S_w_+} can be written as
\begin{equation}
\label{lambda poly weight}
  \l^2+a_w(\t,\e)\l+b_w(\t,\e)=0,
\end{equation}
where
\begin{equation*}
  \begin{split}
    a_w(\t,\e)&=\Big(\e+\frac{1}{\e}\Big)(\t^2-\r^2)+\r\big(2s-\chi\p(0)\big)+\frac{1}{\e}g'(v_+)-i\t\Big(2s-\chi\p(0)-2\r\Big(\e+\frac{1}{\e}\Big)\Big), \\
    b_w(\t,\e)&=\big(R_1(\l,\t,\e)-\l\big)\big(R_2(\l,\t,\e)-\l\big)+R_3(\l,\t,\e).
  \end{split}
\end{equation*}
Let $\l_{w,+}^+(\t,\e)$ and $\l_{w,+}^-(\t,\e)$ be the solutions of \eqref{lambda poly weight}. Then we have
$$
  2\re\l_{w,+}^{\pm}(\t,\e)=-\re\,a_w(\t,\e)\pm\re\sqrt{a_w^2(\t,\e)-4b_w(\t,\e)}.
$$
We notice that $-\re a_w(\t,\e)$ can be written as
$$
  -\re\,a_w(\t,\e)=\Big(\e+\frac{1}{\e}\Big)\r^2-\big(2s-\chi\p(0)\big)\r-\frac{1}{\e}g'(v_+)+O(1)\t^2:=\mathcal{P}(\r)+O(1)\t^2
$$
as $\t\to 0$. The discriminant of the polynomial $\mathcal{P}(\r)$, which is denoted as $\disc(\mathcal{P})$, can be calculated as
$$
  \disc(\mathcal{P})=\big(2s-\chi\p(0)\big)^2+\frac{4}{\e}\Big(\e+\frac{1}{\e}\Big)g'(v_+).
$$
Then, for sufficiently small $\e$, we have
$$
  \disc(\mathcal{P})=\frac{4}{\e^2}g'(v_+)+o\Big(\frac{1}{\e^2}\Big)<0.
$$
It follows that $\mathcal{P}(\r)>0$ for all $\r>0$. Therefore, we have $-\re\,a_w(\t,\e)>0$ for sufficiently small $\e$ and $\t$. So we obtain that either $\re\l_{w,+}^+(\t,\e)>0$ or $\re\l_{w,+}^-(\t,\e)>0$ for sufficiently small $\t$. Again, by the results of essential spectrum as in Theorem A.2 of Chapter 5 in \cite{Henry}, there exists some $\l_2\in\s_e(\mathcal{L}_w)$ satisfying $\re\l_2>0$ when $\e$, $\t$ are small enough. Therefore $(U,V)^T$ is linearly unstable in $X_w$. The proof is complete.
\end{proof}

\appendices
\appendixpage

\section{Resolvent estimates for the operator \boldmath{$\mathcal{L}$}}

In the appendix, we give the detailed proof of Lemma \ref{resolvent estimate}. From \eqref{L}, the domain $\dom(\mathcal{L})$ of operator $\mathcal{L}$ is defined as
$$
BUC^2(\RR;\RR^2):=\left\{\tp\in BUC(\RR;\RR^2)\big|\tp',\tp''\in BUC(\RR;\RR^2)\right\}.
$$

Then in order to obtain the result of Lemma \ref{resolvent estimate}, we first prove the following proposition.

\begin{prps}
\label{propa.1} $BUC^2(\RR;\RR^2)$ is dense in $BUC(\RR;\RR^2)$.
\end{prps}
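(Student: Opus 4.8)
The plan is to prove density by smoothing through convolution with a standard approximate identity (Friedrichs mollifier), since a mollified bounded uniformly continuous function is automatically smooth with bounded derivatives and hence lands in $BUC^2$. Concretely, I would fix a nonnegative $\psi\in C_c^\infty(\RR)$ with $\int_\RR\psi=1$ and support in $[-1,1]$, set $\psi_\delta(x):=\delta^{-1}\psi(x/\delta)$ for $\delta>0$, and define the approximation $\mathbf{q}_\delta:=\mathbf{p}*\psi_\delta$ componentwise. The goal is then to show that $\mathbf{q}_\delta\in BUC^2(\RR;\RR^2)$ for every $\delta>0$ and that $\|\mathbf{q}_\delta-\mathbf{p}\|_\infty\to 0$ as $\delta\to 0$, which together give the density.

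For the membership claim I would argue as follows. Differentiation under the convolution integral gives $\mathbf{q}_\delta\in C^\infty$ with $\mathbf{q}_\delta^{(k)}=\mathbf{p}*\psi_\delta^{(k)}$, and Young's inequality yields $\|\mathbf{q}_\delta^{(k)}\|_\infty\le\|\mathbf{p}\|_\infty\,\|\psi_\delta^{(k)}\|_{L^1}<\infty$ for every $k$. In particular $\mathbf{q}_\delta,\mathbf{q}_\delta',\mathbf{q}_\delta''$ are bounded; moreover each of them has a bounded derivative (namely $\mathbf{q}_\delta',\mathbf{q}_\delta'',\mathbf{q}_\delta'''$ respectively), so each is globally Lipschitz and therefore uniformly continuous. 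Hence $\mathbf{q}_\delta\in BUC^2(\RR;\RR^2)$.

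For the convergence I would use the uniform continuity of $\mathbf{p}$. Writing
\[
 \mathbf{q}_\delta(x)-\mathbf{p}(x)=\int_{|y|\le\delta}\psi_\delta(y)\big(\mathbf{p}(x-y)-\mathbf{p}(x)\big)\,dy,
\]
given $\ve>0$ I would pick $\eta>0$ from uniform continuity so that $|\mathbf{p}(x-y)-\mathbf{p}(x)|<\ve$ whenever $|y|<\eta$, and then take $\delta<\eta$; since $\psi_\delta\ge 0$ integrates to $1$ and is supported in $[-\delta,\delta]$, the displayed identity gives $|\mathbf{q}_\delta(x)-\mathbf{p}(x)|<\ve$ uniformly in $x$. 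This is the only place where uniform continuity (rather than mere boundedness and continuity) is needed, and it is precisely what makes $BUC$ the correct ambient space: without it one would only obtain pointwise or locally uniform convergence. The argument is otherwise routine, so I do not anticipate a genuine obstacle; the sole point requiring care is to keep all estimates uniform in $x$, which the compact support and unit mass of $\psi_\delta$ guarantee.
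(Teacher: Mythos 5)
Your proposal is correct and follows exactly the same route as the paper: mollify $\mathbf{p}$ by convolution with an approximate identity, observe that the mollification lies in $BUC^2(\RR;\RR^2)$, and use the uniform continuity of $\mathbf{p}$ to get uniform convergence as $\delta\to 0$. You simply supply the details (Young's inequality for the derivative bounds, the uniform-continuity estimate for convergence) that the paper's one-line proof leaves implicit.
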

\begin{proof}
Let $\Phi\in BUC(\RR;\RR^2)$. Define
$$
  \Phi_{\d}(\xi):=\int_{-\infty}^{\infty}\vp_{\d}(\xi-y)\Phi(y)dy,
$$
where $\varphi _{\delta }$ is a mollifier. Then we have that $\Phi_{\d}\in BUC^2(\RR;\RR^2)$. Also, for all $\varepsilon >0$, there exist $\delta $ such that $\|\Phi_{\d}-\Phi\|<\ve$.
\end{proof}

Define the operator $\mathcal{L}_{0}$ on $X$ as
$$
  \mathcal{L}_0:=D(\xi)\partial_{\xi\xi}.
$$

\begin{prps}
\label{propa.2}
For any $\l\in\CC$ with $\re\l>0$, the operator $\mathcal{L}_0$ has the following estimate
\begin{equation}
\label{L_0 resolvent estimate}
  \left\|(\mathcal{L}_0-\l I)^{-1}\right\|\le\frac{C_1}{|\l|},
\end{equation}
for some positive constant $C_1$.
\end{prps}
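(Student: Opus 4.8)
The plan is to exploit the upper-triangular structure of the leading coefficient $D(\xi)$, which decouples the resolvent equation $(\mathcal{L}_0-\l I)\tp=\tf$ into two scalar second-order problems, each of which is handled by the classical resolvent estimate for $\partial_{\xi\xi}$ on $BUC(\RR)$. Writing $\tp=(p,q)^T$ and $\tf=(f_1,f_2)^T$, the equation $D(\xi)\tp''-\l\tp=\tf$ reads
$$
  \e p''-\chi U\p'(V')\,q''-\l p=f_1,\qquad \tfrac{1}{\e}q''-\l q=f_2 .
$$
The second equation involves $q$ alone and has the constant leading coefficient $1/\e$.

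First I would solve this scalar equation. Since $\tfrac{1}{\e}\partial_{\xi\xi}$ is sectorial on $BUC(\RR)$ with spectrum $(-\infty,0]$, and since $\mathrm{dist}\big(\l,(-\infty,0]\big)=|\l|$ whenever $\re\l>0$, the classical estimate (obtained concretely from the exponential Green's kernel $-\tfrac{\e}{2\mu}e^{-\mu|\xi-y|}$ with $\mu=\sqrt{\e\l}$, $\re\mu>0$) produces a bounded solution $q$ with $\|q\|_\infty\le \frac{C}{|\l|}\|f_2\|_\infty$. From the equation itself I then read off $q''=\e(\l q+f_2)$, whence $\|q''\|_\infty\le \e\big(|\l|\,\|q\|_\infty+\|f_2\|_\infty\big)\le C'\|f_2\|_\infty$. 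Note this bound on $q''$ does not decay in $\l$, which is expected and, as explained below, harmless.

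Next I would substitute $q''$ into the first equation, recasting it as the scalar problem $\e p''-\l p=\tilde f_1$ with $\tilde f_1:=f_1+\chi U\p'(V')\,q''$. Because the traveling-pulse profile $U$ is bounded, $V'$ is bounded, and $\p'$ is continuous by $(A_1)$, the coefficient $\chi U(\xi)\p'(V'(\xi))$ lies in $BUC(\RR)$; hence $\tilde f_1\in BUC(\RR)$ with $\|\tilde f_1\|_\infty\le \|f_1\|_\infty+C\|q''\|_\infty\le C\|\tf\|$. Applying the same scalar resolvent estimate to $\e\partial_{\xi\xi}$ gives $\|p\|_\infty\le \frac{C}{|\l|}\|\tilde f_1\|_\infty\le \frac{C}{|\l|}\|\tf\|$. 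Combining the bounds for $p$ and $q$ yields $\|\tp\|\le \frac{C_1}{|\l|}\|\tf\|$, which is exactly \eqref{L_0 resolvent estimate}. (Here the constants are allowed to depend on the fixed $\e$.)

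The main obstacle is the middle step: the off-diagonal coupling forces me to control not merely $q$ but its second derivative $q''$, and the natural bound on $q''$ does not decay in $\l$. The point I would verify carefully is that this non-decaying term is absorbed into the inhomogeneity $\tilde f_1$ of the first equation, so that the $1/|\l|$ decay is recovered from the resolvent estimate for $\e\partial_{\xi\xi}$; it is the boundedness (indeed uniform continuity) of $\chi U\p'(V')$ that guarantees $\tilde f_1$ remains in $BUC(\RR)$ and makes the scalar estimate applicable.
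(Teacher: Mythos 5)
Your proposal is correct and follows essentially the same route as the paper's proof: there, too, the triangular system is solved from the bottom up, with $q$ obtained from the exponential Green's kernel of $\tfrac{1}{\e}\partial_{\xi\xi}-\l$ (computed via Fourier transform and residues), the identity $q''=\e(\l q+f_2)$ used to fold the coupling term into the inhomogeneity of the first equation, and the same scalar kernel estimate applied to $\e\partial_{\xi\xi}-\l$ to bound $p$, with the inequality $\sqrt{|\l|}\le 2\re\sqrt{\l}$ supplying the $1/|\l|$ decay. There is no substantive difference between the two arguments.
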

\begin{proof}
Let $\l\in\CC$ with $\re\l>0$, then we have that
\begin{equation}
\label{lambda ineq}
  \sqrt{|\l|}\le2\re\sqrt{\l}.
\end{equation}
Let $\tf=(f_1,f_2)^T\in X$. Consider the equation $(\mathcal{L}_0-\l I)\tp=\tf,\ \tp=(p,q)^T\in X$, which can be written as
\begin{equation}
\label{L_0 resolvent equation}
  \left\{\begin{split}
    \e p''-\chi U\p'(V')q''-\l p&=f_1, \\
    \tfrac{1}{\e}q''-\l q&=f_2.
  \end{split}\right.
\end{equation}
Note that the second equation of \eqref{L_0 resolvent equation} is equivalent to $q''-\e\l q=\e f_2$. Taking the Fourier transform on both side of this equation, we get
$$
  \widehat{q''}-\e\l\hat{q}=\e\hat{f}_2\Rightarrow (iy)^2\hat{q}-\e\l\hat{q}=\e\hat{f}_2\Rightarrow -(y^2+\e\l)\hat{q}=\e\hat{f}_2.
$$
It follows that
$$
  \hat{q}=-\frac{\e\hat{f}_2}{y^2+\e\l}.
$$
Define $\mathcal{H}$ be the Heaviside function as
$$
  \mathcal{H}(x)=\left\{\begin{array}{ll}
    1, & x\ge 0, \\
    0, & x<0.
   \end{array}\right.
$$
By taking the inverse Fourier transform on the above equation for $\hat{q}$ and using the residue theorem, we obtain $q(\xi)$ as follows.
\begin{align}
  q(\xi)&=-\frac{1}{\sqrt{2\pi}}\Big(\frac{\e\hat{f}_2}{y^2+\e\l}\Big)^{\widecheck{}}(\xi) \\
        &=-\frac{\e}{\sqrt{2\pi}}\Big(\frac{1}{y^2+\e\l}\Big)^{\widecheck{}}(\xi)*
           f_2(\xi)=-\frac{\e}{\sqrt{2\pi}}\Big(\frac{1}{\sqrt{2\pi}}\int_{-\infty}^{\infty}\frac{e^{i\xi y}}{y^2+\e\l}dy\Big)*f_2(\xi) \nonumber\\
        &=-\frac{\e}{2\pi}\Big[\mathcal{H}(\xi)\cdot2\pi i\res\Big(\frac{e^{i\xi z}}{z^2+\e\l};i\sqrt{\e\l}\Big)+\mathcal{H}(-\xi)\cdot2\pi i\res
           \Big(\frac{e^{i\xi z}}{z^2+\e\l};-i\sqrt{\e\l}\Big)\Big]*f_2(\xi) \nonumber\\
        &=-\frac{\e}{2\pi}\Big[\mathcal{H}(\xi)\cdot\frac{\pi e^{-\sqrt{\e\l}\xi}}{\sqrt{\e\l}}+\mathcal{H}(-\xi)\cdot\frac{\pi e^{\sqrt{\e\l}\xi}}{\sqrt{\e\l}}\Big]*f_2(\xi) \nonumber\\
        &=-\frac{\e}{2\pi}\cdot\frac{\pi}{\sqrt{\e\l}}e^{-\sqrt{\e\l}|\xi|}*f_2(\xi)=\frac{-\sqrt{\e}}{2\sqrt{\l}}\int_{-\infty}^{\infty}e^{-\sqrt{\e\l}|\xi-y|}f_2(y)dy. \label{q solution}
\end{align}
On the other hand, the first equation of the system \eqref{L_0 resolvent equation} can then be written as
\begin{eqnarray*}
  \e p''-\l p &=& \chi U\p'(V')q''+f_1, \\
              &=& \e\chi U\p'(V')(\l q+f_2)+f_1.
\end{eqnarray*}
According to \eqref{q solution}, $p(\xi)$ can be solved as
\begin{equation}
\label{p solution}
  p(\xi)=\frac{-1}{2\sqrt{\e\l}}\int_{-\infty}^{\infty}e^{-\sqrt{\frac{\l}{\e}}|\xi-y|}[\e k(y)(\l q(y)+f_2(y))+f_1(y)]dy,
\end{equation}
where $k(y):=\chi U(y)\p'(V'(y))$. Let us denote the uniform norm $\|\cdot\|_{\infty}$ by $\|\cdot\|$. Then according to \eqref{q solution} and \eqref{p solution}, we have the following estimate:
\begin{eqnarray}
  \|p\| &\le& \frac{1}{2\sqrt{\e}}\frac{1}{\sqrt{|\l|}}\|\e\l kq+\e kf_2+f_1\|\int_{-\infty}^{\infty}e^{-\re\sqrt{\l}\frac{|\xi-y|}{\sqrt{\e}}}dy, \nonumber\\
        &=&   \frac{1}{2\sqrt{\e}}\frac{1}{\sqrt{|\l|}}\|\e\l kq+\e kf_2+f_1\|\frac{2\sqrt{\e}}{\re\sqrt{\l}}. \label{p est 1} \\
  \|q\| &\le& \frac{\sqrt{\e}}{2\sqrt{|\l|}}\|f_2\|\int_{-\infty}^{\infty}e^{-\re\sqrt{\e\l}|\xi-y|}dy, \nonumber\\
        &=&   \frac{\sqrt{\e}}{2\sqrt{|\l|}}\|f_2\|\frac{2}{\sqrt{\e}\,\re\sqrt{\l}}. \nonumber
\end{eqnarray}
Using the inequality \eqref{lambda ineq}, we get
\begin{equation}
\label{q est 2}
  \|q\|\le\frac{2}{|\l|}\|f_2\|.
\end{equation}
Similarly, using \eqref{lambda ineq} and \eqref{q est 2}, the inequality \eqref{p est 1} can be refined as
\begin{eqnarray}
  \|p\| &\le& \frac{2}{|\l|}(\e|\l|\|k\|\|q\|+\e\|k\|\|f_2\|+\|f_1\|), \nonumber \\
        &\le& \frac{2}{|\l|}(3\e|\l|\|k\|\|f_2\|+\|f_1\|). \label{p est 2}
\end{eqnarray}
Recall that $\tp=(p,q)^T$ is a solution of the system \eqref{L_0 resolvent equation}, or equivalently the equation $(\mathcal{L}_0-\l I)\tp=\tf$. From \eqref{q est 2} and \eqref{p est 2}, we have that
$$
  \big\|(\mathcal{L}_0-\l I)^{-1}\tf\big\|=\|\tp\|\le\frac{2}{|\l|}\left\|\left[\begin{array}{cc}
    1 & 3\e\|k\| \\
    0 & 1
  \end{array}\right]\right\|\|\tf\|:=\frac{C_1}{|\l|}\|\tf\|.
$$
The inequality \eqref{L_0 resolvent estimate} is proved.
\end{proof}

Recall that $\mathcal{L}=\mathcal{L}_0+\mathcal{L}_1$, where $\mathcal{L}_1=M(\xi)\partial_{\xi}+N(\xi)I$. Then we have the following proposition:

\begin{prps}
\label{propa.3}
For $\re\l>0$ sufficiently large, we have
$$
\big\|\mathcal{L}_1(\mathcal{L}_0-\l I)^{-1}\big\|<1.
$$
\end{prps}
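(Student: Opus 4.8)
The plan is to exploit that $\mathcal{L}_1=M(\xi)\partial_{\xi}+N(\xi)I$ is a \emph{first}-order perturbation of the second-order operator $\mathcal{L}_0$, so that on the range of the resolvent $(\mathcal{L}_0-\l I)^{-1}$ it is subordinate to $\mathcal{L}_0$ and gains a negative power of $\l$. Concretely, I would fix $\tf=(f_1,f_2)^T\in X$, set $\tp=(p,q)^T=(\mathcal{L}_0-\l I)^{-1}\tf$, and estimate $\|\mathcal{L}_1\tp\|$. Since the entries of $M(\xi)$ and $N(\xi)$ are bounded and continuous in $\xi$ (for fixed $\e$, by the boundedness and smoothness of the traveling-wave profile $(U,V)$ together with the regularity of $\p$ and $g$), the sup-norms $\|M\|:=\sup_{\xi}\|M(\xi)\|$ and $\|N\|:=\sup_{\xi}\|N(\xi)\|$ are finite constants, and one has the crude bound $\|\mathcal{L}_1\tp\|\le\|M\|\,\|\tp'\|+\|N\|\,\|\tp\|$. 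Proposition \ref{propa.2} already controls $\|\tp\|\le (C_1/|\l|)\|\tf\|$, so the only new ingredient needed is a bound on the first derivatives $p'$ and $q'$.

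These I would obtain by differentiating the explicit convolution representations \eqref{q solution} and \eqref{p solution}. Because the kernels $e^{-\sqrt{\e\l}\,|\xi-y|}$ and $e^{-\sqrt{\l/\e}\,|\xi-y|}$ are continuous across $\xi=y$, differentiation under the integral sign is legitimate and produces no boundary contribution; each $\partial_{\xi}$ merely brings down a factor $-\sqrt{\e\l}\,\sgn(\xi-y)$ (respectively $-\sqrt{\l/\e}\,\sgn(\xi-y)$). For $q'$ the prefactor collapses to $\e/2$ and the $L^1$-mass of the kernel is $2/(\re\sqrt{\e\l})$, which after \eqref{lambda ineq} gives $\|q'\|\le \frac{2\sqrt{\e}}{\sqrt{|\l|}}\|f_2\|$. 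The same computation for $p'$ yields a prefactor $1/(2\e)$ and kernel mass $2/(\re\sqrt{\l/\e})=2\sqrt{\e}/\re\sqrt{\l}$, so that, substituting the bound \eqref{q est 2} for $\|q\|$ exactly as in the derivation of \eqref{p est 2}, one finds $\|p'\|\le \frac{2}{\sqrt{\e}\,\sqrt{|\l|}}\big(3\e\|k\|\,\|f_2\|+\|f_1\|\big)$. The key structural observation is that each derivative cancels one factor of $\sqrt{\l}$ against the $1/\sqrt{\l}$ already present, so $\|p'\|$ and $\|q'\|$ decay like $1/\sqrt{|\l|}$ — one half-power slower than $\|\tp\|$, but still vanishing as $|\l|\to\infty$.

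Combining these, for fixed $\e$ there is a constant $C_5>0$ (depending on $\e$, $\|M\|$, $\|N\|$, $\|k\|$) with $\|\mathcal{L}_1\tp\|\le \frac{C_5}{\sqrt{|\l|}}\|\tf\|$ whenever $|\l|\ge 1$. Since $\tp=(\mathcal{L}_0-\l I)^{-1}\tf$ and $\tf$ is arbitrary, this reads $\big\|\mathcal{L}_1(\mathcal{L}_0-\l I)^{-1}\big\|\le C_5/\sqrt{|\l|}$. Because $\re\l>0$ forces $|\l|\ge\re\l$, choosing $\re\l$ so large that $|\l|>C_5^2$ makes the right-hand side strictly less than $1$, which is the claim.

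I expect the only real subtlety to be the bookkeeping of the powers of $\l$ under differentiation: one must check that the factor $\sqrt{\e\l}$ (or $\sqrt{\l/\e}$) released by $\partial_{\xi}$ exactly cancels the $1/\sqrt{\l}$ in front of the integral, leaving the harmless $1/\sqrt{|\l|}$ decay rather than growth. Once this is verified, the conclusion is a routine consequence of Proposition \ref{propa.2} and the relative (first-order versus second-order) subordination of $\mathcal{L}_1$ to $\mathcal{L}_0$. This proposition then feeds the Neumann-series identity $\mathcal{L}-\l I=(I+\mathcal{L}_1(\mathcal{L}_0-\l I)^{-1})(\mathcal{L}_0-\l I)$, yielding the resolvent estimate \eqref{L resolvent estimate} of Lemma \ref{resolvent estimate}.
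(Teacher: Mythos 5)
Your proof is correct, but it takes a genuinely different route from the paper's. You differentiate the explicit kernel representations \eqref{q solution} and \eqref{p solution} (splitting the convolution at $\xi=y$ confirms the boundary terms cancel, so the derivative indeed just brings down $-\sqrt{\e\l}\,\sgn(\xi-y)$, resp.\ $-\sqrt{\l/\e}\,\sgn(\xi-y)$), and your power counting is right: the released factor of $\sqrt{\l}$ cancels the $1/\sqrt{\l}$ prefactor while the kernel's $L^1$-mass contributes another $1/\re\sqrt{\l}$, giving $\|\tp'\|=O(|\l|^{-1/2})\|\tf\|$ and hence the quantitative bound $\|\mathcal{L}_1(\mathcal{L}_0-\l I)^{-1}\|\le C_5/\sqrt{|\l|}$. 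The paper instead avoids the kernels entirely at this stage: it uses the interpolation inequality $\|\tp'\|\le\d\|\tp''\|+\tfrac{2}{\d}\|\tp\|$, bounds $\|\tp''\|\le\|D^{-1}\|\,\|\mathcal{L}_0\tp\|$, invokes $\mathcal{L}_0(\mathcal{L}_0-\l I)^{-1}=I+\l(\mathcal{L}_0-\l I)^{-1}$ to get the factor $(1+C_1)$, and then optimizes jointly over $\d$ and $|\l|$ via a discriminant condition. Your argument buys a cleaner and strictly stronger conclusion --- the operator norm actually tends to $0$ as $|\l|\to\infty$, with no auxiliary parameter $\d$ to tune --- at the cost of relying on the explicit resolvent formulas (which are, however, already derived in the proof of Proposition \ref{propa.2}, so nothing new is needed). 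The paper's argument is more abstract and would survive in settings where no closed-form resolvent kernel is available; note that its bound $\d\|M\|\|D^{-1}\|(1+C_1)+\tfrac{C_1}{|\l|}(\tfrac{2\|M\|}{\d}+\|N\|)$ does not vanish as $|\l|\to\infty$ for fixed $\d$, which is why the two-parameter balancing is unavoidable there. Both proofs (and your final Neumann-series remark) correctly feed into Lemma \ref{resolvent estimate}; just make sure to record, as you do, that $\|M\|$, $\|N\|$ and $\|k\|$ are finite because the profile $(U,V)$ and its derivatives are bounded, and that all constants are allowed to depend on the fixed $\e$.
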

\begin{proof}
By the interpolation inequality (see, for example, \cite{Volpert}), we have that, for given $\delta >0$, $\tp\in X$ and $\a\in\RR$, there exists $\bar x\in[\a,\a+\d]$ such that
$$
  \tp'(\bar x)=\frac{\tp(\a+\d)-\tp(\a)}{\d}.
$$
Furthermore, for $x\in[\a,\a+\d]$ we have that
$$
  \tp'(x)=\tp'(\bar x)+\int_{\bar x}^x\tp''(s)ds.
$$
Hence
\begin{align*}
  |\tp'(x)| &\le |\tp'(\bar x)|+\|\tp''\|\int_{\bar x}^xds\le\frac{|\tp(\a+\d)|+|\tp(\a)|}{\d}+\d\|\tp''\|, \\
            &\le \d\|\tp''\|+\frac{2}{\d}\|\tp\|.
\end{align*}
Therefore
$$
\|\tp'\|\le\d\|\tp''\|+\frac{2}{\d}\|\tp\|.
$$
It follows that
\begin{align}
  \|\mathcal{L}_1\tp\| &= \|M\tp'+N\tp\|\le\|M\|\Big(\d\|\tp''\|+\frac{2}{\d}\|\tp\|\Big)+\|N\|\|\tp\|, \nonumber\\
                       &\le \d\|M\|\|D^{-1}\|\|\mathcal{L}_0\tp\|+\Big(\frac{2\|M\|}{\d}+\|N\|\Big)\|\tp\|. \label{L_1 estimate}
\end{align}
From Proposition \ref{propa.2} and (\ref{L_1 estimate}), we obtain
\begin{align}
  \big\|\mathcal{L}_1(\mathcal{L}_0-\l I)^{-1}\big\|
    &\le \d\|M\|\|D^{-1}\|\big\|\mathcal{L}_0(\mathcal{L}_0-\l I)^{-1}\big\|+\Big(\frac{2\|M\|}{\d}+\|N\|\Big)\big\|(\mathcal{L}_0-\l I)^{-1}\big\|, \nonumber\\
    &\le \d\|M\|\|D^{-1}\|(1+C_1)+\frac{C_1}{|\l|}\Big(\frac{2\|M\|}{\d}+\|N\|\Big). \label{L_1L_0 est}
\end{align}
For $|\l|$ sufficiently large, we have that
\begin{equation}
\label{quadr}
  |\l|^2-2C_1\big(\|N\|+4\|M\|^2\|D^{-1}\|(1+C_1)\big)|\l|+C_1^2\|N\|^2>0.
\end{equation}
This implies that
$$
  \Big(\frac{C_1\|N\|}{|\l|}-1\Big)^2-4\|M\|\|D^{-1}\|(1+C_1)\cdot\frac{2C_1\|M\|}{|\l|}>0
$$
and thus there exist $\d>0$ such that
$$
  \|M\|\|D^{-1}\|(1+C_1)\d^2+\Big(\frac{C_1\|N\|}{|\l|}-1\Big)\d+\frac{2C_1\|M\|}{|\l|}<0.
$$
Therefore, from \eqref{L_1L_0 est} and \eqref{quadr}, there exists $\d>0$ such that
$$
\big\|\mathcal{L}_1(\mathcal{L}_0-\l I)^{-1}\big\|\le\d\|M\|\|D^{-1}\|(1+C_1)+\frac{C_1}{|\l|}\Big(\frac{2\|M\|}{\d}+\|N\|\Big)<1.
$$
\end{proof}
Now, we are ready to complete the proof of Lemma 3.2.
\begin{proof}[{\bf Proof of Lemma \protect \ref{resolvent estimate}}]
The resolvent $(\mathcal{L}-\l I)^{-1}$ of $\mathcal{L}$ satisfies
$$
(\mathcal{L}-\l I)^{-1}=(\mathcal{L}_0-\l I)^{-1}\Big(I+\mathcal{L}_1(\mathcal{L}_0-\l I)^{-1}\Big)^{-1}.
$$
According to Proposition \ref{propa.3}, we have that $\big\|\mathcal{L}_1(\mathcal{L}_0-\l I)^{-1}\big\|<1$. Note that
$$
  \big(I+\mathcal{L}_1(\mathcal{L}_0-\l I)^{-1}\big)^{-1}=\sum_{\ell=0}^{\infty}\big(-\mathcal{L}_1(\mathcal{L}_0-\l I)^{-1}\big)^{\ell}.
$$
So we get
$$
  \Big\|\big(I+\mathcal{L}_1(\mathcal{L}_0-\l I)^{-1}\big)^{-1}\Big\|\le\sum_{\ell=0}^{\infty}\big\|\mathcal{L}_1(\mathcal{L}_0-\l I)^{-1}\big\|^{\ell}<\infty.
$$
Hence we have that $\big(I+\mathcal{L}_1(\mathcal{L}_0-\l I)^{-1}\big)^{-1}$ is a bounded operator. Therefore according to Proposition \ref{propa.2}, $(\mathcal{L}-\l I)^{-1}$ exists and we have the following estimate:
$$
  \big\|(\mathcal{L}-\l I)^{-1}\big\|\le \big\|(\mathcal{L}_0-\l I)^{-1}\big\|\Big\|\big(I+\mathcal{L}_1(\mathcal{L}_0-\l I)^{-1}\big)^{-1}\Big\|\le\frac{C_2}{|\l|},
$$
where $C_2=C_1\big\|\big(I+\mathcal{L}_1(\mathcal{L}_0-\l I)^{-1}\big)^{-1}\big\|$. The prove of Lemma \ref{resolvent estimate} is complete.
\end{proof}

\end{document}